\newcommand{\be}{\begin{equation}}
\newcommand{\ee}{\end{equation}}
\theoremstyle{plain}
\newtheorem{thm}{Theorem}[section]
\newtheorem{prop}{Proposition}[section]
\newtheorem{cor}[prop]{Corollary}
\newtheorem{lem}[prop]{Lemma}
\newtheorem{rem}[prop]{Remark}
\newtheorem{definition}{Definition}[section]
\author{Undine Leopold and Horst Martini}
\title{Monge points, Euler lines, and Feuerbach spheres in Minkowski spaces}
\begin{document}
\maketitle

\begin{center}
\textbf{Dedicated to Egon Schulte and K\'aroly Bezdek on the occasion of their 60th birthdays}
\end{center}

\begin{abstract}
\textbf{Abstract:} It is surprising, but an established fact that the field of Elementary Geometry referring to normed spaces (= Minkowski spaces) is not a systematically developed discipline. There are many natural notions and problems of elementary and classical geometry that were never investigated in this more general framework, although their Euclidean subcases are well known and this extended viewpoint is promising. An example is the geometry of simplices in non-Euclidean normed spaces; not many papers in this direction exist. Inspired by this lack of natural results on Minkowskian simplices, we present a collection of new results as non-Euclidean generalizations of well-known fundamental properties of Euclidean simplices. These results refer to Minkowskian analogues of notions like Euler line, orthocentricity, Monge point, and Feuerbach sphere of a simplex in a normed space. In addition, we derive some related results on polygons (instead of triangles) in normed planes.\\[.05ex]
\textbf{Keywords and phrases:} 
Birkhoff orthogonality, centroid, circumsphere, Euler line, Feuerbach sphere, isosceles orthogonality, Mannheim's theorem, Minkowskian simplex, Monge point, normality, normed space, orthocentricity\\[0.5ex]
\textbf{2010 Mathematics Subject Classification:} 46B20, 51M05, 51M20, 52A10, 52A20, 52A21, 52B11
\end{abstract}

%%%%%%%%%%%%%%%%%%%%%%%%%%%%%%%%%%%%%%%%%%%%%%%%%%%%%%%%%%%%%%%%%%%%%%%%%
\section{Introduction}\label{sec:intro}
%%%%%%%%%%%%%%%%%%%%%%%%%%%%%%%%%%%%%%%%%%%%%%%%%%%%%%%%%%%%%%%%%%%%%%%%%

Looking at basic literature on the geometry of finite dimensional real Banach spaces (see, e.g., the monograph \cite{t1996:mg} and the surveys \cite{msw2001:tgomsas1} and \cite{ms2004:tgomsas2}), the reader will observe that there is no systematic representation of results in the spirit of elementary and classical geometry in such spaces (in other words, the field of Elementary Geometry is not really developed in normed spaces, also called Minkowski spaces).  This is not only meant in the sense that a classifying, hierarchical structure of theorems is missing. 
Also, it is already appealing to find the way of correctly defining analogous notions. An example of such a non-developed partial field is the geometry of simplices in non-Euclidean Minkowski spaces. Inspired by this indicated lack of natural results on Minkowskian simplices, we derive a collection of new results which reflect non-Euclidean analogues and extensions of well known properties of Euclidean simplices. These results are based on, or refer to, generalizations of notions like Euler lines, orthocentricity (of course depending on a suitable orthogonality notion), Monge points, and Feuerbach spheres of simplices in Minkowski spaces. It should be noticed that some of these topics are even not established for Minkowski planes; most of our results are derived immediately for simplices in Minkowski spaces of arbitrary finite dimension.

In plane Euclidean geometry, the \emph{Euler line} of a given triangle is a well-studied object which contains many interesting points besides the circumcenter and the centroid of this triangle. Other special points on the Euler line include the orthocenter and the center of the so-called \emph{nine-point-} or \emph{Feuerbach circle}. Notions like this can be extended to simplices in higher dimensional Euclidean space, and the respective results can sometimes be sharpened for important subfamilies of general simplices, like for example the family of orthocentric simplices. Using new methods developed by Grassmann for studying the $d$-dimensional Euclidean space, this was done already in the 19th century. Two early related references are \cite{m1884:aeesudedarvbvd} and \cite{r1901:oeotpoto}. Deeper results were obtained later; the concept of Euler line and some related notions have been generalized to Euclidean higher dimensional space in \cite{bb2004:aotnpcfons, ehm2005:osatc, ehm2008:osab,e1940:oos,e1950:otfsoaos,hm2013:osattgot,h1951:udfkmos,k1964:ooas,m1960:udegufkdnds,i1962:tpicitnds} for orthocentric simplices, and in \cite{bb2005:tmpat3psoans,ehm2005:coscarfs,f1976:hfns,m1962:aoasins,m1960:udegufkdnds,s1981:aagotel} for general simplices. %\todo{Snapper actually brings an affine version of the Euler line with complementary point} 
Other interesting generalizations in Euclidean geometry refer to Euler lines of cyclic polygons, see \cite{hg2014:tfcatorcotcp}. For a few results in Minkowski planes and spaces we refer to \cite{at2004:oirnsatel,ag1960:otgomp,cg1985:tfpims,ms2007:tfcaoinp,mw2009:oosiscnp}. The Feuerbach circle of a triangle in the Euclidean plane passes through the feet of the three altitudes, the midpoints of the three sides, and the midpoints of the segments from the three vertices to the orthocenter of that triangle. Beautiful generalizations of the Feuerbach circle to $d$-dimensional Euclidean space for orthocentric simplices have been obtained in \cite{bb2004:aotnpcfons,e1950:otfsoaos,h1951:udfkmos,i1962:tpicitnds}, and for general simplices in \cite{bb2005:tmpat3psoans, m1960:udegufkdnds, f1976:hfns}. Minkowskian analogues have so far only been discussed in normed planes, see \cite{ag1960:otgomp,ms2007:tfcaoinp, rspr2015:oosimp}.

A $d$-dimensional (\emph{normed} or) \emph{Minkowski space} $(\mathbb{R}^d,\|\cdot\|)$ is the vector space $\mathbb{R}^d$ equipped with a norm $\| \cdot \|$. A norm can be given implicitly by its \emph{unit ball} $B(O,1)$, which is a convex body centered at the origin $O$; its boundary $S(O,1)$ is the \emph{unit sphere} of the normed space. Any homothet of the unit ball is called a \emph{Minkowskian ball} and denoted by $B(X,r)$, where $X$ is its center and $r>0$ its radius; its boundary is the \emph{Minkowskian sphere} $S(X,r)$. Two-dimensional Minkowski spaces are \emph{Minkowski planes}, and for an overview on what has been done in the geometry of normed planes and spaces we refer to the book \cite{t1996:mg}, and to the surveys \cite{msw2001:tgomsas1} and \cite{ms2004:tgomsas2}.

The fundamental difference between non-Euclidean Minkowski spaces and the Euclidean space is the absence of an inner product, and thus the notions of angles and orthogonality do not exist in the usual sense. Nevertheless, several \emph{types of orthogonality} can be defined (see \cite{ab1988:oinlsas1},\cite{ab1989:oinlsas2}, and \cite{amw2012:oboaioinls} for an overview), with \emph{isosceles} and \emph{Birkhoff orthogonalities} being the most prominent examples. We say that $y$ is \emph{isosceles orthogonal} to $x$, denoted $x\perp_I y$, when $\|x+y\|=\|x-y\|$. Isosceles orthogonality is thus the orthogonality of diagonals in a parallelogram with equal side lengths (a rhombus in Euclidean space). It is also the orthogonality of chords over a diameter. By contrast, $y$ is \emph{Birkhoff orthogonal} to $x$, denoted $x\perp_B y$, when $\|x\|\leq \|x+\alpha y\|$ for any $\alpha \in \mathbb{R}$. Thus Birkhoff orthogonality is the (unsymmetric) orthogonality of a radius $x$ and corresponding tangent vector $y$ of some ball centered at the origin $O$. For hyperplanes and lines, there is the notion of \emph{normality}. A direction (vector) $v$ is \emph{normal} to a hyperplane $E$ if there exists a radius $r>0$, such that $E$ supports the ball $B(O,r)$ at a multiple of $v$. Equivalently, $v$ is normal to $E$ if any vector parallel to $E$ is Birkhoff orthogonal to $v$.

For any two distinct points $P$, $Q$, we denote by $[PQ]$ the \emph{closed segment}, by $\langle PQ \rangle$ the \emph{spanned line} (affine hull), and by $[PQ\rangle$ the \emph{ray} $\{P+\lambda (Q-P)\ \vert\ \lambda \geq 0\}$; we write $\|[PQ]\|$ for the \emph{length} of $[PQ]$. We will use the usual abbreviations {\rm aff}, {\rm conv}, $\partial$, and {\rm cone} for the affine hull, convex hull, boundary and cone over a set, respectively.

\begin{figure}
\begin{center}
\includegraphics[width=.7\textwidth]{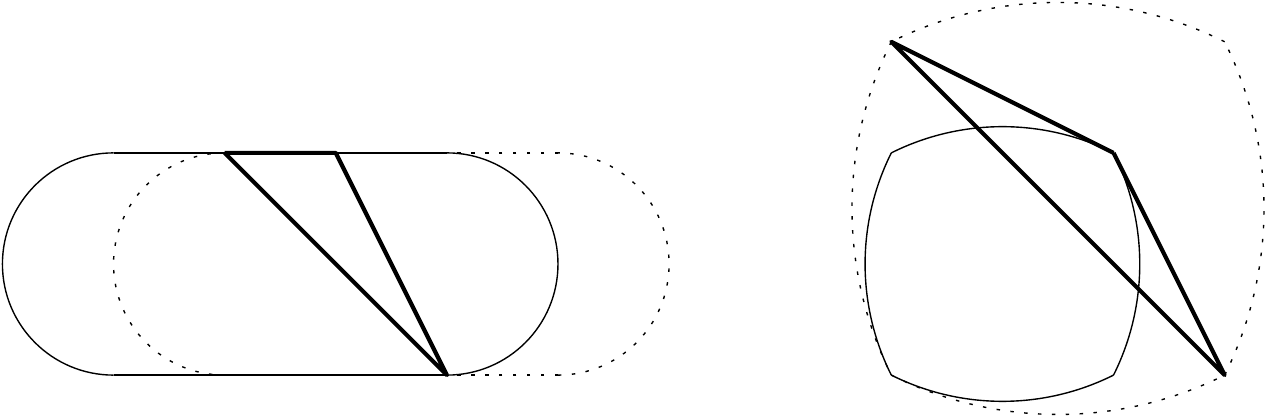}
\end{center}
\caption{A triangle with several circumcenters (left), and a triangle without a circumcenter (right), as illustrated by suitable homothets of the unit ball.}
\label{fig:circumcenters0}
\end{figure}

In this article, we focus on the geometry of simplices in $d$-dimensional Minkowski spaces. As usual, a \emph{$d$-simplex} is the convex hull of $d+1$ points in general linear position, or the non-empty intersection of $d+1$ closed half-spaces in general position. We underline that by \emph{circumcenters of simplices} we mean the \emph{centers of circumspheres} (or \emph{-balls}) \emph{of simplices}, i.e., of Minkowskian spheres containing all the vertices of the respective simplex (see, e.g., \cite{ams2012:medcacinpp1}). A related, but different notion is that of \emph{minimal enclosing spheres of simplices}, sometimes also called circumspheres (cf., e.g., \cite{ams2012:medcacinpp2}); this notion is not discussed here. In the two-dimensional situation, circumspheres and -balls are called \emph{circumcircles} and \emph{-discs}.
In Minkowski spaces, simplices may have several, precisely one, or no circumcenter at all, depending on the shape of the unit ball, see Figure \ref{fig:circumcenters0}. Examples without circumcenters may only be constructed for non-smooth norms, as all smooth norms allow inscription into a ball \cite{g1969:sioah,m1987:tdoamispocg}. \emph{We focus on the case where there is at least one circumcenter.}

%%%%%%%%%%%%%%%%%%%%%%%%%%%%%%%%%%%%%%%%%%%%%%%%%%%%%%%%%%%%%%%%%%%%%%
\section{Orthocentric simplices and the Monge point in Euclidean space}\label{sec:euclidean}
%%%%%%%%%%%%%%%%%%%%%%%%%%%%%%%%%%%%%%%%%%%%%%%%%%%%%%%%%%%%%%%%%%%%%%%

We begin with a short survey on results related to orthocentricity in Euclidean space. In Euclidean geometry, not every simplex in dimension $d\geq 3$ possesses an orthocenter, i.e., a point common to all the altitudes. However, if such a point $H$ exists, the simplex is called \emph{orthocentric} and possesses a number of special properties (compare the survey contained in \cite{ehm2005:osatc} and \cite{hm2013:osattgot}). The following proposition is well known (see again \cite{ehm2005:osatc}).

\begin{prop}\label{prop:orthocentricity1}
A $d$-simplex $T$ in Euclidean space is orthocentric if and only if the direction of every edge is perpendicular to the affine hull of the vertices not in that edge (i.e., the affine hull of the opposite $(d-2)$-face). Equivalently, a $d$-simplex in Euclidean space is orthocentric if and only if any two disjoint edges are perpendicular. 
\end{prop}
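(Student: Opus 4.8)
The plan is to prove the equivalence of the two formulations first, then the easy implication, and finally to exhibit the orthocenter explicitly in the converse. Write the vertices as $A_0,\dots,A_d$ and work inside $\mathrm{aff}(T)$; recall that a point $P$ lies on the altitude $h_k$ from $A_k$ exactly when $\langle P-A_k,\,A_i-A_j\rangle=0$ for all $i,j\neq k$. For the two formulations: the opposite $(d-2)$-face of the edge $[A_iA_j]$ is spanned by the vertices $A_m$, $m\neq i,j$, and its direction space is spanned by the vectors $A_m-A_n$ with $m,n\neq i,j$, i.e.\ by exactly the directions of the edges disjoint from $[A_iA_j]$; hence ``the direction of $[A_iA_j]$ is perpendicular to the opposite face'' and ``$[A_iA_j]$ is perpendicular to every edge disjoint from it'' are the same condition, and quantifying over all edges gives the stated equivalence (both conditions being vacuous for $d\le 2$). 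Call this common hypothesis $(\star)$. The implication ``orthocentric $\Rightarrow(\star)$'' is then immediate: if $H$ is the orthocenter and $[A_iA_j]$, $[A_kA_l]$ are disjoint edges, then $H\in h_i$ and $H\in h_j$ give $\langle H-A_i,A_k-A_l\rangle=0=\langle H-A_j,A_k-A_l\rangle$, and subtracting yields $\langle A_i-A_j,A_k-A_l\rangle=0$.

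For the converse, assume $(\star)$. Fix a vertex $A_i$; I claim the scalar $\langle A_j-A_i,\,A_k-A_i\rangle$ is the same for all distinct $j,k\neq i$. Indeed, replacing $j$ by a third index $r\neq i$ changes it by $\langle A_j-A_r,A_k-A_i\rangle$, which vanishes by $(\star)$ since $\{j,r\}\cap\{k,i\}=\emptyset$; symmetry of the inner product takes care of the second slot (for $d\le 2$ there is nothing to check). Call this common value $\nu_i$. By polarization $\nu_i=\tfrac12\big(\|A_iA_j\|^2+\|A_iA_k\|^2-\|A_jA_k\|^2\big)$, and adding two such identities gives $\nu_i+\nu_j=\|A_iA_j\|^2$ for all $i\neq j$; since edge lengths are positive, at most one $\nu_i$ vanishes. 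The candidate orthocenter is the affine combination $H=\sum_i w_iA_i$ with $w_i$ proportional to $1/\nu_i$, normalized so that $\sum_i w_i=1$. To check $H\in h_k$, expand $\langle H-A_k,A_p-A_q\rangle=\sum_i w_i\langle A_i-A_k,A_p-A_q\rangle$ using $\sum_i w_i=1$; since $\langle A_i-A_k,A_p-A_k\rangle$ equals $0$, $\nu_k+\nu_p$, or $\nu_k$ according as $i=k$, $i=p$, or neither, the sum collapses to $w_p\nu_p-w_q\nu_q$, which is $0$ because $w_i\nu_i$ is constant in $i$. Hence every altitude passes through $H$.

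The one real obstacle is to be sure these weights make sense, i.e.\ that the normalizing constant $\sum_i 1/\nu_i$ is nonzero (geometrically: that the altitudes are not all mutually parallel, so that no ``orthocenter at infinity'' occurs). This is exactly where non-degeneracy of $T$ is used. If no $\nu_i$ vanishes, then with base point $A_0$ the Gram matrix $G=\big(\langle A_p-A_0,A_q-A_0\rangle\big)_{p,q=1}^d$ equals $\mathrm{diag}(\nu_1,\dots,\nu_d)+\nu_0 J$, with $J$ the all-ones matrix, so the rank-one update formula for determinants gives $\det G=\big(\prod_{i=0}^d\nu_i\big)\big(\sum_{i=0}^d 1/\nu_i\big)$; since $T$ is a non-degenerate $d$-simplex, $\det G\neq 0$, forcing $\sum_i 1/\nu_i\neq 0$, so $H$ is well defined. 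If instead exactly one $\nu_{i_0}$ vanishes, a direct check using $(\star)$ together with $\nu_{i_0}=0$ shows that the vertex $A_{i_0}$ itself lies on every altitude and is the orthocenter. Everything else is routine, so the crux is this determinant identity, which simultaneously produces the correct barycentric weights and rules out the degenerate parallel-altitudes case for an honest simplex. (A more synthetic alternative is induction on $d$, using that the foot of each altitude of an orthocentric simplex is the orthocenter of the opposite facet; but the computation above seems cleanest.)
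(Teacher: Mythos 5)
The paper offers no proof of this proposition: it is stated as well known and referred to the literature (the survey of Edmonds, Hajja, and Martini), so there is nothing to compare against line by line. Your argument is correct and complete. The reduction of the two formulations to the single condition $(\star)$ is right, since the direction space of the opposite ridge is spanned exactly by the edge vectors disjoint from the given edge; the forward implication by subtracting the two altitude conditions is standard; and the converse is carried out cleanly. In particular, the invariance of $\nu_i=\langle A_j-A_i,A_k-A_i\rangle$ under $(\star)$, the identity $\nu_i+\nu_j=\|A_iA_j\|^2$ (which shows at most one $\nu_i$ vanishes), the collapse of $\langle H-A_k,A_p-A_q\rangle$ to $w_p\nu_p-w_q\nu_q$ for weights $w_i\propto 1/\nu_i$, and the Gram-determinant identity $\det G=\bigl(\prod_i\nu_i\bigr)\bigl(\sum_i 1/\nu_i\bigr)$ guaranteeing the normalization is possible, all check out; the residual case of a single vanishing $\nu_{i_0}$ (orthocenter at the vertex $A_{i_0}$) is indeed a direct verification. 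It is worth noting that the parameters $\nu_i$ you introduce are precisely the standard coordinates used to parametrize orthocentric simplices in the cited survey, so your proof in effect reconstructs the classical argument rather than inventing an ad hoc one; the only stylistic quibble is that the connectedness argument showing $\nu_i$ is independent of the pair $(j,k)$ (moving one index at a time while keeping the four indices involved distinct) deserves half a sentence more for $d=3$, where the choice of the auxiliary index $r$ is forced.
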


The $(d-2)$-faces of a $d$-polytope are sometimes called \emph{ridges}, see \cite{ms2002:arp}. 
The following fact (see also the survey in \cite{ehm2005:osatc}) can be proved in many ways, and has been posed as a problem in the American Mathematical Monthly \cite{khm1998:ouo}. Note that orthocenters are not defined for an edge or a point.
\begin{prop}
In an orthocentric Euclidean $d$-simplex ($d\geq 3$), the foot of every altitude is the orthocenter of the opposite facet. 
\end{prop}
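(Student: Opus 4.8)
The plan is to argue with position vectors relative to the orthocenter $H$ of $T$, and to produce the orthocenter of each facet explicitly rather than first invoking Proposition~\ref{prop:orthocentricity1}. Fix a vertex $A_i$ of $T$ and let $F_i=\operatorname{conv}\{A_k:k\neq i\}$ be the opposite facet, a $(d-1)$-simplex. Let $P_i$ be the foot of the altitude of $T$ from $A_i$, that is, the point in which the altitude line --- the line through $A_i$ perpendicular to $\operatorname{aff}(F_i)$, which by definition of the orthocenter also passes through $H$ --- meets $\operatorname{aff}(F_i)$; in particular $P_i\in\operatorname{aff}(F_i)$. For $j\neq i$ put $R_{ij}=\operatorname{aff}\{A_k:k\neq i,j\}$, the ridge of $T$ opposite the edge $[A_iA_j]$; then $R_{ij}\subseteq\operatorname{aff}(F_i)\cap\operatorname{aff}(F_j)$, and the altitude of the $(d-1)$-simplex $F_i$ issuing from its vertex $A_j$ is, by definition, the line inside $\operatorname{aff}(F_i)$ through $A_j$ that is orthogonal to $R_{ij}$. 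Hence it suffices to check, for each $j\neq i$, that $P_i$ lies on that line.

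First I would record two orthogonality facts. Since $P_i$ and $H$ both lie on the altitude of $T$ from $A_i$, which is perpendicular to $\operatorname{aff}(F_i)$, the vector $P_i-H$ is orthogonal to the direction space of $\operatorname{aff}(F_i)$, hence to that of $R_{ij}$. Since $A_j$ and $H$ both lie on the altitude of $T$ from $A_j$, which is perpendicular to $\operatorname{aff}(F_j)$, the vector $A_j-H$ is orthogonal to the direction space of $\operatorname{aff}(F_j)$, hence again to that of $R_{ij}$. Subtracting gives that $A_j-P_i=(A_j-H)-(P_i-H)$ is orthogonal to the direction space of $R_{ij}$; and since $A_j,P_i\in\operatorname{aff}(F_i)$, the vector $A_j-P_i$ lies in the direction space of $\operatorname{aff}(F_i)$. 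Therefore either $P_i=A_j$, or $\langle A_jP_i\rangle$ is precisely the altitude of $F_i$ from $A_j$; in either case $P_i$ lies on that altitude. As $j\neq i$ was arbitrary, $P_i$ lies on all $d$ altitudes of $F_i$, so $F_i$ is orthocentric with orthocenter $P_i$; and the same argument applies to every $i\in\{0,\dots,d\}$.

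I do not anticipate a genuine obstacle, only two points of care. One must keep orthogonality \emph{inside} the hyperplane $\operatorname{aff}(F_i)$ distinct from ambient orthogonality; this is harmless here because $A_j-P_i$ has already been shown to lie in the direction space of $\operatorname{aff}(F_i)$, so being orthogonal to $R_{ij}$ in $\mathbb{R}^d$ is the same as being orthogonal to it within $\operatorname{aff}(F_i)$. One should also dispose of the degenerate configurations --- the foot $P_i$ coinciding with a facet vertex $A_j$, or $H$ coinciding with a vertex of $T$, which genuinely occurs for $d\ge 3$ (for instance, the trirectangular tetrahedron) --- but in each of these the difference vectors above are simply zero, so the displayed relations still hold and the conclusion is unaffected, which is why the step ends with an ``either/or''. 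If one instead prefers to separate the two halves of the statement, the orthocentricity of $F_i$ can be obtained first: for $d=3$ a triangle always has an orthocenter, and for $d\ge 4$ Proposition~\ref{prop:orthocentricity1} applies since disjoint edges of $F_i$ are disjoint edges of $T$ and hence perpendicular; only the identification of that orthocenter with $P_i$ then remains, and the vector computation above supplies it.
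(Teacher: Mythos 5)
Your proof is correct. Note that the paper itself offers no argument for this proposition: it is quoted as a known fact, ``provable in many ways,'' with a pointer to the survey \cite{ehm2005:osatc} and to the Monthly problem \cite{khm1998:ouo}. So there is no in-paper proof to compare against; what you supply is a self-contained justification, and it is sound. The two orthogonality facts are exactly right: $P_i-H$ is parallel to the altitude from $A_i$ and hence orthogonal to the direction space of $\operatorname{aff}(F_i)\supseteq R_{ij}$, while $A_j-H$ is parallel to the altitude from $A_j$ and hence orthogonal to the direction space of $\operatorname{aff}(F_j)\supseteq R_{ij}$; the difference $A_j-P_i$ is therefore orthogonal to $\operatorname{dir}(R_{ij})$ and lies in $\operatorname{dir}(\operatorname{aff}(F_i))$, and since the orthogonal complement of the $(d-2)$-dimensional $\operatorname{dir}(R_{ij})$ inside the $(d-1)$-dimensional $\operatorname{dir}(\operatorname{aff}(F_i))$ is one-dimensional, the line $\langle A_jP_i\rangle$ (when defined) is forced to be the altitude of $F_i$ from $A_j$. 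A pleasant feature of your argument is that it establishes orthocentricity of the facet and identifies its orthocenter in one stroke, by exhibiting a single point on all $d$ altitudes of $F_i$, rather than first invoking Proposition \ref{prop:orthocentricity1} (which, as you note, would anyway need the triangle case separately when $d=3$). Your attention to the degenerate configurations is also appropriate and correctly resolved: the trirectangular tetrahedron really does have its orthocenter at a vertex, and in all such cases the relevant difference vectors vanish, so the orthogonality relations hold vacuously and the conclusion stands.
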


In absence of a guaranteed orthocenter, the literature on Euclidean geometry (e.g. \cite{c2003:gmatmpott, ac1964:mpsg} for three dimensions, \cite{bb2005:tmpat3psoans,ehm2005:osatc,hm2013:osattgot} for the general case) defines the \emph{Monge point} of a tetrahedron or higher-dimensional simplex as the intersection of so-called \emph{Monge (hyper-)planes}. The Monge point coincides with the Euclidean orthocenter if the latter exists \cite{ac1964:mpsg,c2003:gmatmpott, bb2005:tmpat3psoans}. From this, theorems about the Euler line, the Feuerbach circle, etc. can be generalized to higher dimensional simplices, see all the references given in the Introduction, and see Section \ref{sec:euler} for Minkowskian analogues. We recall the definition and the following theorems from \cite{bb2005:tmpat3psoans}. 
\begin{definition}
Let $T$ be a $d$-simplex in Euclidean $d$-space. A \emph{Monge hyperplane} is a hyperplane which is perpendicular to an edge of the simplex and which passes through the centroid of the opposite $(d-2)$-face (ridge).
\end{definition}

\begin{thm} (Monge Theorem)
The Monge hyperplanes of a Euclidean $d$-simplex have precisely one point in common, which is called the \emph{Monge point} $N$ of the simplex.
\end{thm}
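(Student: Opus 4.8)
The plan is to exhibit one explicit point lying on all $\binom{d+1}{2}$ Monge hyperplanes, and then argue that the common intersection is exactly a single point. Label the vertices of $T$ as $P_0,\dots,P_d$, let $G$ be their centroid (so $G=\frac{1}{d+1}\sum_i P_i$), and let $C$ be a circumcenter of $T$, i.e.\ a point equidistant (in the Euclidean metric) from all the $P_i$. I would consider the point
\be
N \;=\; C + \frac{2}{d-1}\,(G - C),
\ee
equivalently the point dividing the ray from $C$ through $G$ in a fixed ratio depending only on $d$. The claim is that $N$ lies on the Monge hyperplane associated with every edge $[P_iP_j]$; once this is shown, the Monge hyperplanes have nonempty intersection, and the uniqueness part will follow from a dimension/genericity count.

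First I would set up the key computation. Fix an edge $[P_iP_j]$ and let $F_{ij}$ be the centroid of the opposite ridge, i.e.\ the average of the $d-1$ vertices other than $P_i,P_j$; note $(d+1)G = P_i+P_j+(d-1)F_{ij}$, so $F_{ij} = \frac{1}{d-1}\big((d+1)G - P_i - P_j\big)$. The Monge hyperplane for this edge is $\{X : \langle X - F_{ij},\, P_i - P_j\rangle = 0\}$. So I must verify $\langle N - F_{ij},\, P_i - P_j\rangle = 0$. Substituting the expression for $N$ and for $F_{ij}$ in terms of $C$ and $G$, and writing everything relative to $C$ as origin (so that $\langle P_i - C, P_i - C\rangle = \langle P_j - C, P_j - C\rangle = R^2$, hence $\langle P_i - P_j,\, P_i + P_j - 2C\rangle = \|P_i - C\|^2 - \|P_j - C\|^2 = 0$), the inner product should collapse: the $G$-terms combine to a multiple of $\langle P_i+P_j-2C,\,P_i-P_j\rangle$, which vanishes by the circumcenter property, and the remaining terms cancel by the choice of the constant $\frac{2}{d-1}$. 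This is the routine but essential calculation; I would present it compactly by working in the vector space with $C$ at the origin.

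For existence this already suffices. For uniqueness — that the intersection is a single point and not a larger affine subspace — I would argue as follows: the Monge hyperplane for $[P_iP_j]$ has normal direction $P_i-P_j$, and since $P_0,\dots,P_d$ are in general position, among the vectors $\{P_i-P_j\}$ one can select $d$ that are linearly independent (e.g.\ $P_1-P_0,\dots,P_d-P_0$). The corresponding $d$ Monge hyperplanes have linearly independent normals, so they already meet in exactly one point; all remaining Monge hyperplanes pass through that point by the computation above. Hence the common intersection is the single point $N$.

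The main obstacle is purely bookkeeping: keeping the affine combinations straight and making the cancellation in the inner-product identity transparent rather than a brute-force expansion. Choosing $C$ as the origin and exploiting $\|P_i-C\| = \|P_j-C\|$ at the right moment is what makes the verification short; the constant $\tfrac{2}{d-1}$ is then forced, which incidentally also identifies $N$ as the point that will serve as the Minkowskian Monge point in the sequel. Note that this argument only uses the existence of a circumcenter, which is exactly the standing assumption of the paper; in the Euclidean setting a circumcenter always exists, so the theorem holds unconditionally.
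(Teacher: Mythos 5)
Your overall strategy is the standard one and would work, but the explicit point you name is not the Monge point, and with it the key verification fails. Take $C$ as the origin, so $\|P_i\|=R$ for all $i$, and write your candidate as $N=\lambda G$. With $F_{ij}=\frac{1}{d-1}\bigl((d+1)G-P_i-P_j\bigr)$ one gets
\[
N-F_{ij}=\frac{\bigl(\lambda(d-1)-(d+1)\bigr)G+P_i+P_j}{d-1},
\]
and since $\langle P_i+P_j,\,P_i-P_j\rangle=\|P_i\|^2-\|P_j\|^2=0$ by the circumcenter property, the condition $\langle N-F_{ij},P_i-P_j\rangle=0$ for all $i\neq j$ forces $\lambda=\frac{d+1}{d-1}$ (as $\langle G,P_i-P_j\rangle$ does not vanish in general). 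So the correct point is $N=C+\frac{d+1}{d-1}(G-C)$, not $C+\frac{2}{d-1}(G-C)$; your formula gives $N=G$ for $d=3$, whereas the Monge point of a tetrahedron is the reflection of the circumcenter in the centroid. The likely source of the slip is the division ratio: $G$ divides $[CN]$ internally in the ratio $(d-1):2$, i.e.\ $N=G+\frac{2}{d-1}(G-C)$, with base point $G$ rather than $C$. With the corrected constant, your existence computation goes through exactly as you sketch it, and your uniqueness argument (selecting the $d$ linearly independent normals $P_1-P_0,\dots,P_d-P_0$) is fine.

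For comparison: the paper states this Euclidean theorem without proof, citing the literature, and instead proves the Minkowskian generalization (Theorem \ref{thm:monge1}) by a purely affine, synthetic argument --- each Monge line is parallel to the line through $M$ and the opposite edge midpoint, the quasi-medians connect these two families of lines, and the centroid's $2:(d-1)$ division of every quasi-median forces all Monge lines through one point. That route deliberately avoids inner products and perpendicularity, which is precisely what lets it survive in a normed space; your inner-product verification is shorter in the Euclidean setting but is tied to it. Both identify the same point $N=M+\frac{1}{d-1}\sum_i(A_i-M)$.
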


\begin{thm} (Orthocenter Theorem)
In an orthocentric Euclidean $d$-simplex, the Monge point $N$ coincides with the orthocenter $H$.
\end{thm}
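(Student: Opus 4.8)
The plan is to show that the orthocenter $H$ already lies on every Monge hyperplane of $T$; since the Monge Theorem guarantees that these hyperplanes have \emph{exactly} one point $N$ in common, this immediately forces $N=H$. Write $T=\mathrm{conv}\{A_0,\dots,A_d\}$ and fix one edge, say $[A_iA_j]$. Let
\[
G_{ij}=\frac{1}{d-1}\sum_{k\neq i,j}A_k
\]
be the centroid of the opposite ridge (well defined, since for $d\geq 3$ this $(d-2)$-face has $d-1\geq 2$ vertices). By definition the Monge hyperplane associated with $[A_iA_j]$ is the hyperplane through $G_{ij}$ whose normal direction is $A_j-A_i$, so it suffices to verify that $(H-G_{ij})\cdot(A_j-A_i)=0$.

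Since $H-G_{ij}=\frac{1}{d-1}\sum_{k\neq i,j}(H-A_k)$, it is enough to prove $(H-A_k)\cdot(A_j-A_i)=0$ for every vertex $A_k$ with $k\neq i,j$. This is the only place orthocentricity enters, and only in the weak form that $H$ is a point lying on all altitudes: because $k\neq i,j$, the facet of $T$ opposite $A_k$ contains both $A_i$ and $A_j$, so its affine hull (a hyperplane) contains the direction $A_j-A_i$; the altitude from $A_k$ is by definition orthogonal to that hyperplane, and both $A_k$ and $H$ lie on this altitude, whence $H-A_k$ is orthogonal to $A_j-A_i$. Summing over all $k\neq i,j$ yields $(H-G_{ij})\cdot(A_j-A_i)=0$, i.e. $H$ lies on the Monge hyperplane of $[A_iA_j]$. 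As the edge was arbitrary, $H$ lies on all Monge hyperplanes, and the Monge Theorem gives $N=H$.

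I do not expect a genuine obstacle here: the computation is short, and the argument rests entirely on the single observation that the altitude dropped from a vertex \emph{not} lying on a given edge is automatically perpendicular to that edge. The only matters needing a little care are the bookkeeping with vertex indices and the remark that the full equivalence of Proposition~\ref{prop:orthocentricity1} is not required — mere existence of the orthocenter suffices, consistently with the fact that the Monge point is defined for \emph{every} Euclidean simplex whereas the orthocenter need not exist.
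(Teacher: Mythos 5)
Your proof is correct, and it is the standard argument: decompose $H-G_{ij}$ into the vectors $H-A_k$ for $k\neq i,j$, each of which is parallel to the altitude from $A_k$ and hence perpendicular to the facet opposite $A_k$, which contains the edge direction $A_j-A_i$; then invoke uniqueness from the Monge Theorem. Note that the paper does not actually prove this statement --- it is recalled from \cite{bb2005:tmpat3psoans} without proof --- so there is no in-paper argument to compare against; your observation that only the existence of $H$ on all altitudes is needed (not the full equivalence of Proposition~\ref{prop:orthocentricity1}) is accurate and worth keeping.
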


\begin{thm}\label{thm:mannheim}(Mannheim Theorem, see \cite{ac1964:mpsg,c2003:gmatmpott} for $d=3$, and \cite{ bb2005:tmpat3psoans} for arbitrary $d$)
For any $d$-simplex, the $d+1$ planes, each determined by an altitude of a $d$-simplex and the Monge point of the corresponding facet, pass through the Monge point of the $d$-simplex.
\end{thm}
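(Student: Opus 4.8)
The plan is to normalize by putting the circumcenter of $T$ at the origin, derive an explicit position vector for the Monge point, and then compare it with the analogous vector for the Monge point of a facet. First I would set up notation: let $A_0,\dots,A_d$ be the vertices of $T$ and let $O$ be the circumcenter of $\{A_0,\dots,A_d\}$ (which exists in the Euclidean case), taken as the origin, so $\|A_i\|=R$ for all $i$, where $R$ is the circumradius. The key preliminary is the identity $N=\tfrac1{d-1}\sum_{i=0}^dA_i$. To establish it I would verify that this point lies on each Monge hyperplane: for the hyperplane orthogonal to the edge $[A_jA_k]$ through the centroid $c_{jk}=\tfrac1{d-1}\sum_{\ell\neq j,k}A_\ell$ of the opposite ridge, the difference of the two points is $\tfrac1{d-1}(A_j+A_k)$, whose inner product with $A_k-A_j$ equals $\tfrac1{d-1}(\|A_k\|^2-\|A_j\|^2)=0$. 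Hence $\tfrac1{d-1}\sum_iA_i$ lies on every Monge hyperplane, so it equals $N$ by the Monge Theorem (this also reproves that theorem).

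Next I would fix $i$ and analyse the facet $F_i$ opposite $A_i$, a $(d-1)$-simplex contained in the hyperplane $H_i=\mathrm{aff}(F_i)$ with unit normal $\nu_i$. By definition the altitude of $T$ from $A_i$ is the line $A_i+\mathbb{R}\nu_i$, so the plane $\Pi_i$ it spans together with the facet's Monge point $N_i$ has direction space $\mathrm{span}\{\nu_i,\,N_i-A_i\}$, and the whole claim reduces to showing that $N-A_i$ lies there. Since $O$ is equidistant from the vertices of $F_i$, the circumcenter $O_i$ of $F_i$ is the orthogonal projection of $O$ onto $H_i$, whence $O_i$ (viewed as a vector from $O=$ origin) is a scalar multiple of $\nu_i$. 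Applying the identity of the first step inside $H_i$ to the $(d-1)$-simplex $F_i$, whose circumcenter is $O_i$, gives $N_i=O_i+\tfrac1{d-2}\sum_{j\neq i}(A_j-O_i)$; since the sum has $d$ terms, writing $S=\sum_{j=0}^dA_j$ this simplifies to $N_i=\tfrac1{d-2}(S-A_i-2O_i)$.

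Finally I would combine the two formulas: taking the affine combination $(1-\lambda)A_i+\lambda N_i$ with $\lambda=\tfrac{d-2}{d-1}$, the $A_i$-terms cancel and one obtains $(1-\lambda)A_i+\lambda N_i=\tfrac1{d-1}S-\tfrac2{d-1}O_i=N-\tfrac2{d-1}O_i$, so that $N-A_i=\lambda(N_i-A_i)+\tfrac2{d-1}O_i$, which lies in $\mathrm{span}\{N_i-A_i,\nu_i\}$ because $O_i\parallel\nu_i$. Thus $N\in\Pi_i$, and since $i\in\{0,\dots,d\}$ was arbitrary all $d+1$ planes pass through $N$.

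I expect the only real difficulty to be bookkeeping: getting the constants $d-1$ and $d-2$ right by carefully counting the vertices of a ridge and of a facet, and applying the barycentric identity correctly one dimension down. One should also note the standing assumption $d\ge3$ (for $d=2$ the facets are edges and carry no Monge point; for $d=3$ the facet Monge point is simply the orthocenter of the triangular facet, matching $d-2=1$), and the harmless degenerate case in which $N_i$ happens to lie on the altitude from $A_i$, so that $\Pi_i$ is not a genuine $2$-plane; there the relation $N-A_i\in\mathrm{span}\{N_i-A_i,\nu_i\}$ is still valid and is the right reading of the assertion.
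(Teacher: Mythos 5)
Your argument is correct, but note that the paper itself contains no proof of this statement: the Mannheim Theorem is quoted in Section 2 as known Euclidean background (with references), and the authors later remark at the end of Section 3 that it is precisely the result that does \emph{not} extend to Minkowski spaces, because hyperplane sections of Minkowskian balls need not be centrally symmetric. Your proof makes the obstruction visible: the one genuinely Euclidean step is that the circumcenter $O_i$ of the facet $F_i$ is the orthogonal projection of $O$ onto $H_i$ and hence a scalar multiple of the normal $\nu_i$; this is exactly what fails for a general norm. As a Euclidean argument everything checks out. The identity $N=\frac{1}{d-1}\sum_{i}A_i$ (circumcenter at the origin) agrees with the formula $N_M=M+\frac{1}{d-1}\sum_i(A_i-M)$ that the paper derives in Section 3, and your verification that this point lies on each Monge hyperplane via $\langle A_j+A_k,\,A_k-A_j\rangle=\|A_k\|^2-\|A_j\|^2=0$ is clean. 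The one-dimension-down formula $N_i=\frac{1}{d-2}\left(S-A_i-2O_i\right)$ is right (the facet has $d$ vertices, so the denominator is $(d-1)-1=d-2$), and the affine combination with $\lambda=\frac{d-2}{d-1}$ exhibits $N-A_i=\lambda(N_i-A_i)+\frac{2}{d-1}O_i$, i.e.\ membership of $N$ in the plane through $A_i$ spanned by $N_i-A_i$ and $\nu_i$, which is the assertion. Your side remarks on the standing assumption $d\ge 3$ and on the degenerate case where $\Pi_i$ collapses to a line are also the right caveats.
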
 

Regular simplices are orthocentric. Regular simplices are also \emph{equilateral}, i.e., all their edges have equal length, as well as \emph{equifacetal}, which means that all their facets are isometric (congruent). Furthermore, the circumcenter $M$, centroid $G$, orthocenter $H$, and \emph{incenter} I, i.e., the center of the unique inscribed sphere touching all facets, coincide. Conversely, we have the following statement, see \cite{ehm2005:osatc}. 
\begin{thm}\label{thm:regularity}
A Euclidean $d$-simplex $T$ is regular, if and only if any of the following conditions are fulfilled:
\begin{enumerate}
\item $T$ is equilateral.
\item $T$ is orthocentric and any two of the centers $M$, $G$, $I$, $H$ coincide. 
\item $T$ is orthocentric and equifacetal.
\end{enumerate}
\end{thm}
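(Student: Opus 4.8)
The direction ``$T$ regular $\Rightarrow$ (1)--(3)'' is classical, as a regular $d$-simplex is equilateral, is orthocentric, has $M,G,I,H$ all coinciding with its center of symmetry, and has pairwise isometric facets. Implication (1)$\Rightarrow$regular is likewise immediate once one recalls that \emph{equilateral already forces regularity}: if all edges of $T$ have a common length $\ell$, the Gram matrix of the vectors $A_i-G$ is completely determined by $\ell$ and $d$, so $T$ is unique up to isometry and hence congruent to the standard regular simplex. The real content is therefore (2)$\Rightarrow$regular and (3)$\Rightarrow$regular, and the plan is to run both of them through a common normal form for orthocentric simplices.

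So let $T$ be orthocentric with orthocenter $H$, put $H$ at the origin, and write $a_i=A_i-H$, $r_i=\|a_i\|^2$. Since $H$ lies on the altitude from each vertex $A_k$, which is perpendicular to $\operatorname{aff}(\{A_i:i\neq k\})$, we get $\langle a_k,\,a_i-a_j\rangle=0$ for all $i,j\neq k$, and a short chase of these relations produces a single scalar $c$ with $\langle a_i,a_j\rangle=c$ for all $i\neq j$. I would extract four facts for repeated use: (i) $\|A_i-A_j\|^2=r_i+r_j-2c$, so $T$ is equilateral if and only if all $r_i$ are equal; (ii) $a_k$ is normal to the facet opposite $A_k$, whose affine hull is $\{x:\langle a_k,x\rangle=c\}$, so the altitude from $A_k$ has length $h_k=|r_k-c|/\sqrt{r_k}$, and the pyramid-volume identity $\operatorname{vol}(T)=\tfrac1d h_kV_k$ (with $V_k$ the $(d-1)$-volume of that facet) gives $V_k\propto 1/h_k$; (iii) the Gram matrix $\operatorname{diag}(r_0-c,\dots,r_d-c)+cJ$, with $J$ the all-ones matrix, is singular (being the Gram matrix of $d+1$ linearly dependent vectors), so for $c\neq0$ one obtains the identity $\sum_i 1/(r_i-c)=-1/c$ (the case $c=0$ forces some $r_i=0$ --- a ``right simplex'' with $H$ a vertex --- which I would dispose of as a quick separate case); (iv) expanding $\langle a_i,a_j\rangle=c$ one checks in two lines that the circumcenter is $M=\tfrac{d+1}{2}\,G$ (with $H$ at the origin), so $M,G,H$ are collinear and any two of them coincide precisely when $G=H$.

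Condition (3) then drops out: congruent facets have equal sums of squared edge lengths, and by (i) that sum for the facet opposite $A_k$ equals $(d-1)\bigl(\sum_{i\neq k}r_i-cd\bigr)$; requiring it to be independent of $k$ forces all $r_k$ equal, so $T$ is equilateral and hence, by the remark above, regular. For the pairs taken from $\{M,G,H\}$ in (2), fact (iv) collapses any coincidence to $G=H$, i.e.\ $\sum_i a_i=0$; taking the inner product with $a_j$ then gives $r_j+dc=0$ for every $j$, so all $r_i$ are equal and $T$ is regular.

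There remain the pairs of (2) that involve the incenter $I$, and this is where the work concentrates. By (ii), the barycentric coordinates of $I$ are proportional to $V_k$, hence to $1/h_k=\sqrt{r_k}/|r_k-c|$; those of $H$, of $G$, and --- using (iii) and (iv) --- of $M$ turn out to be proportional to $1/(r_k-c)$, to $1$, and to $(r_k+(d-2)c)/(r_k-c)$, respectively. Equating $I$'s coordinates with each set in turn: for $I=H$ one reads off that $\operatorname{sgn}(r_k-c)$ is constant and then that $\sqrt{r_k}$ is constant, so all $r_k$ agree; for $I=G$ one gets that $(r_k-c)^2/r_k$ is constant, a quadratic in $r_k$, so the $r_k$ realize at most two values, and feeding those two values into the singularity identity (iii) --- the signs being controlled by positivity of the edge lengths from (i) --- rules the two-value case out; for $I=M$ one similarly arrives at a quadratic in $\sqrt{r_k}$, positivity of edge lengths forces $r_k>c$ for all $k$, and a second value would make $\sum_i 1/(r_i-c)$ positive, contradicting (iii) (here one also uses that the circumcenter, being equal to the incenter, is interior, so its barycentric coordinates are positive). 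In every case the $r_i$ are forced equal, $T$ is equilateral, and therefore regular. I expect the real obstacle to be exactly this sign bookkeeping for the incenter pairs --- reconciling the quadratic constraints on the $r_i$ with the singularity identity (iii), with positivity of edge lengths, and with interiority of the incenter --- whereas everything preceding it is the normal form plus a few lines of linear algebra.
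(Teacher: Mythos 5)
First, a point of reference: the paper does not prove this theorem at all --- it is quoted in the survey Section~2 as a known result from \cite{ehm2005:osatc}, so there is no internal proof to compare against. Your reduction is in fact the standard machinery of that reference: normalize so that $H$ is the origin, derive $\langle a_i,a_j\rangle=c$ for $i\neq j$, and read every center off the numbers $r_i=\|a_i\|^2$. Facts (i)--(iv) are all correct (including the barycentric coordinates $1/(r_k-c)$, $1$, $(r_k+(d-2)c)/(r_k-c)$, $\sqrt{r_k}/|r_k-c|$ for $H,G,M,I$), the equifacetal case and the coincidences within $\{M,G,H\}$ collapse to ``all $r_i$ equal'' exactly as you say, and the $I=H$ and $I=G$ cases close as described: for $I=G$, writing the two admissible roots as $\rho_1\rho_2=c^2$ and feeding them into $\sum_i 1/(r_i-c)=-1/c$, with edge-positivity controlling how many $r_i$ can lie below $c$, really does force $d=1$.

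Two spots need repair, though neither is structural. In the $I=M$ case, positivity of edge lengths gives only $r_i+r_j>2c$, which permits \emph{one} index with $r_k<c$; what excludes it is the interiority of $M=I$ (your parenthetical): $\mu_k>0$ with $r_k-c<0$ would need $r_k+(d-2)c<0$, impossible for $r_k\ge0$, $c>0$. More importantly, once all $r_k>c$, the contradiction ``a second value makes $\sum_i 1/(r_i-c)$ positive'' only works when $c>0$ (where it shows that branch is vacuous); when $c<0$ the sum is \emph{supposed} to be positive, and the two-value case dies for a different reason, namely that the quadratic $\lambda x^2-x+\lambda(d-2)c=0$ in $x=\sqrt{r_k}$ has roots of product $(d-2)c<0$, hence only one positive root. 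Finally, the $c=0$ (right-corner) case is flagged but never executed; it does need a line for each of $M=G$, $I=G$, $I=M$ (respectively: $G$ would be a vertex; the facet opposite the corner vertex is strictly larger than the leg facets by the Pythagorean identity $V_0^2=\sum_{k\ge1}V_k^2$; $M$ has a nonpositive barycentric coordinate at the corner vertex) and for equifacetality ($r_0=0$ cannot equal the other $r_k$). With those adjustments the argument is complete and matches the method of the cited source.
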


As we will see in the next Section, the concept of Monge point generalizes to arbitrary Minkowski spaces, at least for simplices with a circumcenter. 
%%%%%%%%%%%%%%%%%%%%%%%%%%%%%%%%%%%%%%%%%%%%%%%%%%%%%%%%%%%%%%%%%
\section{The Monge point of simplices in Minkowski spaces}\label{sec:monge}
%%%%%%%%%%%%%%%%%%%%%%%%%%%%%%%%%%%%%%%%%%%%%%%%%%%%%%%%%%%%%%%%%

In this section, we generalize the definition of Monge point and Monge hyperplanes to Minkowski spaces of arbitrary (finite) dimension $d\geq 2$. 

\begin{definition}
Let $(\mathbb{R}^d,\|\cdot\|)$ be a $d$-dimensional Minkowski space, and let $T$ be a $d$-simplex with a circumcenter $M$. For each pair $(F,E_F)$ of a ridge $F$ and opposite edge $E_F$, and if $M$ is not the midpoint of $E_F$, define the \emph{associated Monge line} as the line through the centroid of $F$ which is parallel to the line through $M$ and the midpoint of $E_F$.
\end{definition}

\begin{thm}\label{thm:monge1}
Let $(\mathbb{R}^d,\|\cdot\|)$ be a $d$-dimensional Minkowski space, and let $T$ be a $d$-simplex with a circumcenter $M$. 
Then the Monge lines of $T$ are concurrent in a single point $N_M$, called the \emph{Monge point of $T$}. 
\end{thm}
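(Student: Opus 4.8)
The plan is to write down an explicit candidate for $N_M$ as an affine combination of the vertices and $M$, and then to verify by a one-line vector computation that it lies on each Monge line; one sees in the process that the concurrency is a purely affine fact, the circumcenter hypothesis serving only to single out this particular point as the ``right'' analogue of the Euclidean Monge point.

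First I would translate so that $M$ becomes the origin $O$; write $P_0,\dots,P_d$ for the vertices of $T$, $S=P_0+\dots+P_d$, and $G=\frac{1}{d+1}S$ for the centroid. For an edge $E_{ij}=[P_iP_j]$ and its opposite ridge $F_{ij}=\mathrm{conv}\{P_k:k\neq i,j\}$, the centroid of $F_{ij}$ is $G_{ij}=\frac{1}{d-1}(S-P_i-P_j)$, the midpoint of $E_{ij}$ is $\frac12(P_i+P_j)$, and the hypothesis ``$M$ is not the midpoint of $E_{ij}$'' reads $P_i+P_j\neq O$. Hence the associated Monge line is $\ell_{ij}=\{\,G_{ij}+t(P_i+P_j):t\in\mathbb{R}\,\}$, the vector $P_i+P_j$ spanning the line through $M$ and the midpoint of $E_{ij}$.

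Next I would propose
\[
 N_M=\frac{1}{d-1}\,S=\frac{d+1}{d-1}\,G
\]
(that is, $N_M=M+\frac{d+1}{d-1}(G-M)$ before the translation; for $d=3$ this is the reflection $N_M=2G-M$ of the circumcenter in the centroid, the classical Monge point of a tetrahedron). The verification is immediate: $N_M-G_{ij}=\frac{1}{d-1}S-\frac{1}{d-1}(S-P_i-P_j)=\frac{1}{d-1}(P_i+P_j)$, a scalar multiple of the direction of $\ell_{ij}$, so $N_M\in\ell_{ij}$. Since this holds for every admissible pair $(i,j)$, all Monge lines pass through $N_M$. To see that the assertion has content, one checks that a simplex has at most one edge with $M$ as midpoint (two of them would violate the general position of the vertices), so at least $\binom{d+1}{2}-1\ge 2$ Monge lines are defined; and they are not all the same line, since equal directions $P_i+P_j$ for all pairs would force three of the $P_k$ to be collinear. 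Hence $N_M$ is the unique common point.

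There is no serious obstacle here: the only points demanding care are (i) pinning down the affine coefficient $\tfrac{1}{d-1}$ --- it is forced by requiring $N_M-G_{ij}$ to be parallel to $P_i+P_j$, and it is reassuring that it reproduces the classical three-dimensional formula --- and (ii) the bookkeeping around the edge that may be excluded from the definition, so that ``concurrent in a single point'' is genuinely meaningful. I would also record the remark that the argument never used $\|P_i\|$ being constant, so the concurrency persists for the ``Monge lines'' built from any base point $M$; the circumcenter reappears essentially in the later sections, where $N_M$ is shown to deserve the name Monge point through its relation to orthocentricity, the Euler line, and the Feuerbach sphere.
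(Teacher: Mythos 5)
Your proof is correct, and it takes a different route from the paper's. The paper argues synthetically: it first proves (as a lemma) that the quasi-medians of $T$ meet in the centroid $G$ and are divided there in the ratio $2\colon(d-1)$, and then observes that each Monge line is the translate of the corresponding auxiliary line $\langle MG(E_F)\rangle$ along a quasi-median; the common division ratio at $G$ forces all the points $N(F)$ to coincide (in effect, $N_M$ is the image of $M$ under the homothety with center $G$ and ratio $-\tfrac{2}{d-1}$), with a separate short argument for the degenerate case $M=G$. You instead guess the point $N_M=M+\tfrac{1}{d-1}\sum_i(A_i-M)$ and verify membership in every Monge line by the one-line identity $N_M-G_{ij}=\tfrac{1}{d-1}(P_i+P_j)$. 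This bypasses the quasi-median lemma, needs no case split on $M=G$, and simultaneously establishes the explicit formula that the paper only derives afterwards as a corollary; the trade-off is that the paper's version explains geometrically \emph{why} the point lies where it does on the line $\langle MG\rangle$, whereas yours pulls the coefficient $\tfrac{1}{d-1}$ out of a hat and checks it. Your attention to the nondegeneracy bookkeeping (at most one edge has $M$ as midpoint; at least two defined Monge lines have non-parallel directions, since three collinear edge midpoints would force three affinely dependent vertices via the midpoint-triangle homothety) is if anything more careful than the paper's, and your closing remark that the concurrency is purely affine matches the Remark the authors make after their Corollary.
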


Before proving the theorem, essentially following the outline for Euclidean space in \cite{bb2005:tmpat3psoans}, we first define a \emph{quasi-median} of a $d$-simplex as a line joining the centroid of a $(d-2)$-face of the simplex with the midpoint of the opposite edge. The following Lemma was proved in \cite{bb2005:tmpat3psoans} for Euclidean space, yet due to the definition of the centroid it holds true in any Minkowski space.

\begin{lem}
The quasi-medians of a $d$-simplex $T$ intersect in its centroid. The centroid divides each quasi-median in the ratio $2\colon (d-1)$ (with the segment measuring $\frac{d-1}{d+1}$ of the length of the quasimedian ending in the midpoint of an edge).
\end{lem}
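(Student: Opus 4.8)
The plan is to reduce the statement to a single barycentric identity and to observe that everything in it — centroids, midpoints, collinearity, and ratios of collinear segments — is affine data, so the norm plays no role. I would label the vertices of $T$ as $A_0,\dots,A_d$, so that the centroid is $G=\frac{1}{d+1}\sum_{i=0}^d A_i$. Then fix an arbitrary edge, say $E=[A_0A_1]$ with midpoint $P=\frac12(A_0+A_1)$, and write $F={\rm conv}\{A_2,\dots,A_d\}$ for the opposite ridge, whose centroid is $C=\frac{1}{d-1}\sum_{i=2}^d A_i$. The quasi-median determined by this pair is the segment $[CP]$.

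The key step is to group the sum defining $G$ along this partition of the vertex set, obtaining $G=\frac{1}{d+1}\bigl(2P+(d-1)C\bigr)=\frac{2}{d+1}\,P+\frac{d-1}{d+1}\,C$. Since the two weights are positive and add up to $1$, this exhibits $G$ as a point of the segment $[CP]$, i.e. on the quasi-median; and from $G-P=\frac{d-1}{d+1}(C-P)$ one reads off that $[PG]$, the part ending at the edge-midpoint, has length $\frac{d-1}{d+1}\|[CP]\|$, so that the centroid divides $[CP]$ in the ratio $\|[CG]\|\colon\|[GP]\|=2\colon(d-1)$. Because the computation never referred to the particular edge chosen, the same point $G$ lies on every one of the $\binom{d+1}{2}$ quasi-medians, which therefore all pass through the centroid.

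I do not expect a genuine obstacle here; the only thing that needs spelling out is why the asserted length ratio is meaningful in a general normed space. For collinear points $X,Y,Z$ with $Y-X=\mu(Z-X)$ one has $\|[XY]\|=|\mu|\,\|[XZ]\|$, so ratios of collinear segments coincide with affine ratios and are independent of $\|\cdot\|$. Thus the lemma is literally the Euclidean identity of \cite{bb2005:tmpat3psoans}, transported without change once one notes that neither orthogonality nor an inner product enters the argument. As a sanity check, for $d=2$ a ridge is a vertex, which equals its own centroid, the quasi-medians become the usual medians, and the familiar ratio $2\colon1$ is recovered.
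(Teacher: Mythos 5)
Your proof is correct and is exactly the argument the paper has in mind: the paper does not spell out a proof but merely cites \cite{bb2005:tmpat3psoans} and remarks that the statement ``holds true in any Minkowski space due to the definition of the centroid,'' which is precisely your barycentric identity $G=\frac{2}{d+1}P+\frac{d-1}{d+1}C$ together with the observation that ratios of collinear segments are affine and hence norm-independent. Nothing is missing.
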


\begin{proof} (Proof of the theorem)
The proof is similar to, but more general than, the one in \cite{bb2005:tmpat3psoans} for Euclidean space. First, for each $(d-2)$-face $F$ denote its centroid $G(F)$, and let $G(E_F)$ be the midpoint or centroid of the opposite edge $E_F$. Since a $d$-simplex possesses ${d+1 \choose 2}$ edges (ridges) and $M$ can be located at the midpoint of at most one of them, the auxiliary lines $\langle MG(E_F)\rangle$ are well-defined for at least ${d+1 \choose 2}-1$ pairs $(F,E_F)$. The auxiliary line $\langle MG(E_F)\rangle$, if well-defined, is parallel to the associated Monge line $\langle G(F)L(F)\rangle$ of $(F,E_F)$, where we define $L(F):=G(F)+G(E_F)-M$. Second, if $M=G$, then $G$ and $G(F)$ both lie on $\langle MG(E_F)\rangle$, i.e., each auxiliary line coincides with the associated Monge line, and all these lines intersect in $N_M:=M=G$ (and this is the only point, since different edge midpoints define different lines $\langle MG(E_F)\rangle$). If $M\neq G$, then auxiliary line and Monge line are distinct. Observe that each quasimedian $[G(F)G(E_F)]$ connects a Monge line and the corresponding auxiliary line. 
\begin{figure}
\begin{center}
\includegraphics[width=\textwidth]{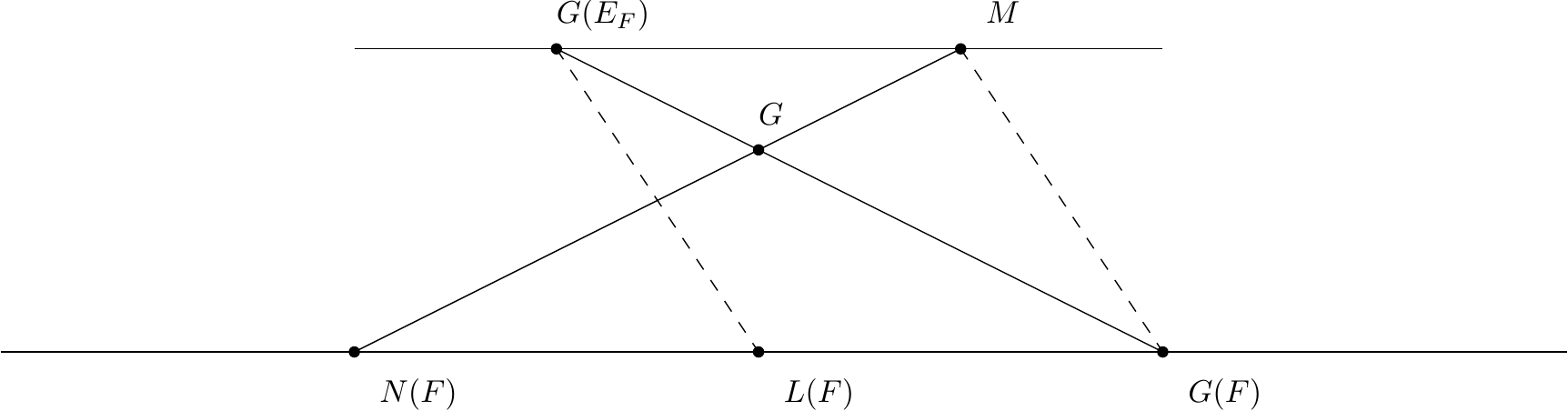}
\end{center}
\caption{Location of the Monge point.}
\label{fig:mongepoint1}
\end{figure}
The centroid $G$ of the simplex $T$ divides each quasimedian in the ratio $2\colon (d-1)$, so the same division ratio holds true for the segment $[MN(F)]$ which passes through the given circumcenter $M$, the centroid $G$ of $T$, and ends at the point $N(F)$ on $[G(F)L(F)\rangle$, see Figure \ref{fig:mongepoint1}. As a consequence of this common ratio, all points $N(F)$ are indeed the same point $N_M$, solely dependent on the chosen circumcenter (and the given simplex), and all rays $[G(F)L(F)\rangle$ meet at $N_M$. 
\end{proof}

In keeping with the tradition in Euclidean space, we want to reformulate the theorem in terms of hyperplanes.

\begin{definition}
Let $(\mathbb{R}^d,\|\cdot\|)$ be a $d$-dimensional Minkowski space, and let $T$ be a $d$-simplex with a circumcenter $M$. Suppose $M$ is not the midpoint of an edge $E_F$ opposite a $(d-2)$-face $F$ of the simplex. For the pair $(F,E_F)$ define the \emph{auxiliary pencil} of hyperplanes through $M$ and the midpoint of $E_F$. Furthermore, define the associated \emph{Monge hyperplane pencil} for the pair $(F,E_F)$ as the translate of the auxiliary pencil such that all hyperplanes go through the centroid of $F$. 
\end{definition}

\begin{cor}\label{cor:monge1}
Let $(\mathbb{R}^d,\|\cdot\|)$ be a $d$-dimensional Minkowski space, and let $T$ be a $d$-simplex with a circumcenter $M$. 
Then the hyperplanes of all (well-defined) Monge hyperplane pencils of $T$ intersect in a single point, namely the Monge point of $T$. 
\end{cor}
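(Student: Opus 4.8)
The plan is to reduce the corollary to Theorem~\ref{thm:monge1} by recognizing that each Monge hyperplane pencil is exactly the family of \emph{all} hyperplanes containing the corresponding Monge line. I would first record an elementary fact: for $d\ge 2$, the intersection of all hyperplanes of $\mathbb{R}^d$ that contain a fixed line $\ell$ is $\ell$ itself. One inclusion is trivial; for the other, if $p\notin\ell$, write $\ell=a+\mathbb{R}u$, observe that $u$ and $p-a$ are linearly independent, pick a linear functional $\phi$ with $\phi(u)=0$ and $\phi(p-a)\ne 0$, and note that $\{x:\phi(x-a)=0\}$ is a hyperplane containing $\ell$ but not $p$.

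Next I would unwind the definitions. Fix a pair $(F,E_F)$ for which $M$ is not the midpoint of $E_F$, and set $u:=G(E_F)-M$, where $G(E_F)$ denotes the midpoint of $E_F$. The auxiliary pencil of $(F,E_F)$ is the set of hyperplanes through $M$ and $G(E_F)$, i.e., the set of hyperplanes containing the auxiliary line $\langle MG(E_F)\rangle=M+\mathbb{R}u$. Translating this family by the vector $G(F)-M$ yields the set of hyperplanes containing the line $G(F)+\mathbb{R}u$, and by the definition $L(F)=G(F)+G(E_F)-M$ this is precisely the associated Monge line $\langle G(F)L(F)\rangle$. Hence the Monge hyperplane pencil of $(F,E_F)$ is the set of all hyperplanes $H$ with $\langle G(F)L(F)\rangle\subseteq H$. (This description is independent of the translation vector used to realize the definition: any admissible $w$ must satisfy $G(F)-w\in\langle MG(E_F)\rangle$ by the fact above, so $w$ differs from $G(F)-M$ by a vector in $\mathbb{R}u$, and translating a hyperplane containing $\mathbb{R}u$ by such a vector leaves it unchanged.)

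Finally, combining the two observations: a point lies on every hyperplane of every well-defined Monge hyperplane pencil of $T$ if and only if it lies on $\bigcap_{(F,E_F)}\langle G(F)L(F)\rangle$, the intersection of all Monge lines of $T$, which by Theorem~\ref{thm:monge1} is the single point $N_M$. Thus all Monge hyperplanes are concurrent in the Monge point $N_M$, as claimed. The argument is essentially a repackaging of Theorem~\ref{thm:monge1}, so there is no genuinely hard step; the only points requiring a little care are the well-definedness of the translation defining the Monge hyperplane pencil and the (otherwise trivial) lemma that a line equals the intersection of the hyperplanes through it.
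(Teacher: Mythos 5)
Your proof is correct and takes the same route the paper intends: the paper gives no explicit proof of Corollary~\ref{cor:monge1}, treating it as an immediate reformulation of Theorem~\ref{thm:monge1} once one notes that each Monge hyperplane pencil consists precisely of the hyperplanes containing the corresponding Monge line. You have simply filled in the routine details (a line is the intersection of the hyperplanes through it, and the translation defining the pencil is well-defined up to a shift along the line), which is exactly the content the paper leaves implicit.
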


The following corollary tells us the precise location of the Monge point with respect to the vertices of the simplex and the given circumcenter.

\begin{cor}
Let $T={\rm conv}\{A_0,\ldots,A_d\}$ be a $d$-simplex in $d$-dimensional Minkowski space, possessing a circumcenter $M$. Then the associated Monge point is determined as 

\[N_M=M+\frac{\sum_{i=0}^d (A_i-M)}{d-1}.\]
\end{cor}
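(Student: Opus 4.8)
The plan is to verify directly that the point
$P := M+\frac{1}{d-1}\sum_{i=0}^{d}(A_i-M)$ lies on \emph{every} well-defined Monge line of $T$, and then to invoke Theorem~\ref{thm:monge1}, which guarantees that all these lines pass through one and the same point $N_M$; this forces $P=N_M$. Thus the corollary reduces to a single affine identity, and no delicate argument beyond the previous theorem is needed.

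First I would fix a pair $(F,E_F)$ of a ridge $F$ and its opposite edge $E_F$, relabelling the vertices so that $E_F=[A_jA_k]$ and $F={\rm conv}\{A_i : i\neq j,k\}$. I only need to treat those pairs for which $M$ is not the midpoint of $E_F$; since $T$ has $\binom{d+1}{2}\geq 3$ edges and $M$ can be the midpoint of at most one of them, there is at least one such pair (indeed all but at most one). By the definition of the associated Monge line, it passes through $G(F)=\frac{1}{d-1}\sum_{i\neq j,k}A_i$ and is parallel to the vector $G(E_F)-M$, where $G(E_F)=\frac12(A_j+A_k)$.

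The computational core is then to check that $P-G(F)$ is a scalar multiple of $G(E_F)-M$. Substituting the expressions for $P$ and $G(F)$ and collecting the terms containing $M$ separately from $A_j$ and $A_k$ (all remaining $A_i$ cancel), one obtains $P-G(F)=\frac{1}{d-1}\bigl(A_j+A_k-2M\bigr)=\frac{2}{d-1}\bigl(G(E_F)-M\bigr)$. Hence $P$ lies on the Monge line associated to $(F,E_F)$. As this pair was arbitrary among the well-defined ones, $P$ lies on every Monge line of $T$, and therefore $P=N_M$ by Theorem~\ref{thm:monge1}.

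I do not expect a genuine obstacle here: once Theorem~\ref{thm:monge1} is available, the statement is essentially a one-line identity in affine coordinates. The only points requiring a little care are the bookkeeping of vertex labels (making sure exactly $A_j$ and $A_k$ survive the cancellation) and the remark that at least one Monge line is well defined, so that Theorem~\ref{thm:monge1} applies. Alternatively, one could read the formula off from the proof of Theorem~\ref{thm:monge1} by intersecting the line $\langle MG\rangle$ with a single Monge line and using the $2\colon(d-1)$ division ratio at the centroid $G$, but the direct verification above is shorter and self-contained.
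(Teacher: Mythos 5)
Your proposal is correct: the identity $P-G(F)=\frac{1}{d-1}(A_j+A_k-2M)=\frac{2}{d-1}\bigl(G(E_F)-M\bigr)$ checks out, it shows $P$ lies on every well-defined Monge line, and Theorem~\ref{thm:monge1} (which asserts concurrency in a \emph{single} point) then forces $P=N_M$; your formula also degenerates correctly to $P=M$ when $M=G$. However, your route is not the one the paper takes. The paper's proof is essentially the ``alternative'' you mention in passing: it reuses the internal ratio information from the proof of Theorem~\ref{thm:monge1}, namely that $G$ divides $[MN_M]$ in the ratio $(d-1)\colon 2$ (inherited from the $2\colon(d-1)$ division of the quasimedians), and then solves $N_M=M+(d+1)\frac{G-M}{d-1}$ for the stated expression, treating $M=G$ separately. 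Your argument has the advantage of being self-contained and of using only the \emph{statement} of Theorem~\ref{thm:monge1} rather than details of its proof; it also re-verifies, as a byproduct, that the candidate point really sits on each Monge line. The paper's argument is shorter given that the division ratios have already been established, and it makes the position of $N_M$ on the Euler line $\langle MG\rangle$ explicit, which is exactly what is exploited later in Theorem~\ref{thm:eulerline}. Both are valid; the only point worth making explicit in your version is that at least two distinct Monge lines exist (so that ``the'' common point in Theorem~\ref{thm:monge1} is indeed a single point), which you essentially cover by counting the edges.
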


\begin{proof}
Let $F$ be a ridge of the simplex, opposite the edge $E_F$, such that $G(E_F)\neq M$ (i.e., the edge midpoint is distinct from $M$; such an edge must exist). From the proof of Theorem \ref{thm:monge1} we deduce for $M\neq G$ that \[\|[MG(E_F)]\|\colon\|[G(F)N_M]\| = \|[MG]\|\colon\|[GN_M]\|= (d-1)\colon 2.\]
Thus
\[N_M=M+(d+1)\frac{G-M}{d-1}=M+\frac{(d+1)\frac{\sum_{i=0}^{d}(A_i - M)}{d+1}}{d-1}=M+\frac{\sum_{i=0}^d (A_i-M)}{d-1}.\]
For $M=G$ we obtain $N_M=M=G$.
\end{proof}

\begin{rem}
In Euclidean context, each \emph{Monge hyperplane} passes through the centroid of a $(d-2)$-face $F$ and is perpendicular to the opposite edge $E_F$ (here \emph{opposite edge} means the edge between the two vertices not in the ridge $F$). However, we see that perpendicularity is not necessary, and any hyperplane containing the associated Monge line as per our definition is suitable (provided the Monge line is well-defined). Therefore, while our Minkowskian Monge pencils contain the correct Monge hyperplanes in Euclidean context, we have the confirmation that \emph{orthogonality of lines and hyperplanes} need not necessarily play a role when finding the Monge point. The concept of Monge point is even an \emph{affine} concept, as the circumcenter property of $M$ is used nowhere (i.e., \emph{any} point $M$ can be used to construct ``Monge lines'' intersecting at $N_M$ with the analytical expression given above). 
\end{rem}

In particular, we obtain the following corollary, which appears to be new also for the Euclidean case.

\begin{cor}\label{cor:monge}
Let $(\mathbb{R}^d,\|\cdot\|)$ be a $d$-dimensional Minkowski space, and let $T$ be a $d$-simplex with a circumcenter $M$. For each ridge $F$ and the opposite edge $E_F$ with midpoint $G(E_F)$, if $M\neq G(E_F)$ and $\langle MG(E_F)\rangle$ is \emph{not} parallel to $F$, define an $M$-hyperplane as the hyperplane containing $F$ and being parallel to $\langle MG(E_F)\rangle$. Then all defined $M$-hyperplanes intersect in the Monge point $N_M$.  
\end{cor}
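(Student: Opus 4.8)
The plan is to reduce this to Theorem~\ref{thm:monge1}: I claim that whenever the $M$-hyperplane of a pair $(F,E_F)$ is defined, it \emph{contains} the associated Monge line of that same pair, and then concurrency of the $M$-hyperplanes at $N_M$ follows at once from the concurrency of the Monge lines at $N_M$. Recall that the Monge line of $(F,E_F)$ is the line through the centroid $G(F)$ of $F$ with direction vector $G(E_F)-M$, and that it is defined exactly when $M\neq G(E_F)$.

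First I would check that the two sets of non-degeneracy hypotheses are compatible. The hypothesis $M\neq G(E_F)$ is precisely what makes the line $\langle MG(E_F)\rangle$, hence the Monge line of $(F,E_F)$, exist. The further hypothesis that $\langle MG(E_F)\rangle$ is not parallel to $F$ says that $G(E_F)-M$ does not lie in the $(d-2)$-dimensional direction space of $\mathrm{aff}(F)$; equivalently, the affine span of $\mathrm{aff}(F)$ and its translate $\mathrm{aff}(F)+(G(E_F)-M)$ has dimension exactly $d-1$. This is exactly what makes the $M$-hyperplane a well-defined hyperplane (and then unique, since a hyperplane containing a $(d-1)$-dimensional flat coincides with it). In particular, whenever the $M$-hyperplane of $(F,E_F)$ is defined, so is the Monge line of $(F,E_F)$.

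Now the $M$-hyperplane contains $\mathrm{aff}(F)$, hence contains $G(F)$; and being parallel to $\langle MG(E_F)\rangle$, its direction space contains $G(E_F)-M$. Therefore it contains the whole line through $G(F)$ with direction $G(E_F)-M$, i.e.\ the Monge line of $(F,E_F)$. By Theorem~\ref{thm:monge1} that Monge line passes through $N_M$, so $N_M$ lies on the $M$-hyperplane; letting $(F,E_F)$ range over all admissible pairs gives that $N_M$ lies on every defined $M$-hyperplane, which is the assertion. (If one additionally wants $N_M$ to be the \emph{only} common point, one argues as in the Euclidean case by a dimension count: the direction spaces of suitably many $M$-hyperplanes, containing the direction spaces of pairwise distinct ridges, meet only in the origin. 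This is routine and not needed for the stated conclusion.)

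I expect the only delicate point to be the bookkeeping in the second paragraph, i.e.\ confirming that ``$\langle MG(E_F)\rangle$ not parallel to $F$'' is exactly the condition guaranteeing that $\mathrm{aff}(F)$ together with the one extra direction $G(E_F)-M$ spans a $(d-1)$-dimensional flat, so that the $M$-hyperplane is simultaneously well-defined, unique, and forced to contain the Monge line. Once that is settled, the statement is an immediate corollary of Theorem~\ref{thm:monge1}.
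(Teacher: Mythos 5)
Your containment argument is correct and is exactly the observation the paper disposes of in one sentence: since the $M$-hyperplane of a pair $(F,E_F)$ contains $\mathrm{aff}(F)\ni G(F)$ and its direction space contains $G(E_F)-M$, it contains the whole Monge line of that pair, hence $N_M$ by Theorem~\ref{thm:monge1}. Your check that the non-parallelism hypothesis is precisely what makes the $M$-hyperplane well-defined and unique is also fine.

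The gap is in what you set aside as ``routine and not needed.'' The corollary, read in parallel with Corollary~\ref{cor:monge1}, asserts that the defined $M$-hyperplanes intersect \emph{in the point} $N_M$, i.e.\ that their common intersection is exactly $\{N_M\}$, and this is where almost all of the paper's proof is spent. Two things must be shown. First, that enough $M$-hyperplanes are defined at all: a priori many pairs $(F,E_F)$ could fail the non-parallelism condition, so one needs to exhibit at least $d$ admissible pairs. The paper does this by noting that $M$ lies outside at least one medial hyperplane, say the one separating $A_0$ from its opposite facet; since each midpoint $G([A_0A_i])$ lies in that medial hyperplane and the ridge $F_{0,i}$ is parallel to it, the segments $[MG([A_0A_i])]$ cannot be parallel to $F_{0,i}$, giving $d$ defined $M$-hyperplanes. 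Second, that these $d$ hyperplanes are in general position. Your proposed dimension count --- that the direction spaces of the $M$-hyperplanes ``contain the direction spaces of pairwise distinct ridges'' and hence meet only in the origin --- does not follow: each such direction space is $(d-1)$-dimensional, strictly larger than the $(d-2)$-dimensional ridge direction it contains, so general position of the ridges alone does not force the hyperplane direction spaces to intersect trivially (already for $d=3$ one needs information about the directions $G(E_F)-M$, not just about the ridges). The paper closes this by identifying the $d$ hyperplanes as parallels of the facets through the vertex $M$ of the auxiliary simplex $\mathrm{conv}\{M\cup T_0'\}$, where $T_0'$ is an explicit homothet of the medial simplex $T_0=\mathrm{conv}\{G([A_0A_i])\}$; facets of a simplex through a common vertex are automatically in general position. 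So your proof establishes that $N_M$ is a common point but not that it is the only one; to match the statement you need to supply the existence-of-$d$-admissible-pairs argument and a genuine general-position argument along the lines above.
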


\begin{proof}
Let $A_0,\ldots,A_d$ denote the vertices of $T$. Observe that, since the medial hyperplanes of $T$ are in general position, $M$ lies in at most $d$ of the $d+1$ medial hyperplanes. Without loss of generality, $M$ does not lie in the medial hyperplane between $A_0$ and its opposite facet. Since $G([A_0A_i])$ lies in that medial hyperplane for $i=1,\ldots,d$, and the ridge $F_{0,i}$ opposite $[A_0A_i]$ is parallel to that medial hyperplane, we conclude that $[MG([A_0A_i])]$ is not parallel to $F_{0,i}$, and the $M$-hyperplanes are defined at least for the $d$ pairs $(F_{0,i},[A_0A_i])$.

Consider the $(d-1)$-simplex \[T_0:={\rm conv} \{G([A_0A_i]), i=1,\ldots,d\},\] which is a homothet of the \emph{facet} $F_0$ of $T$ opposite $A_0$ with homothety center $A_0$ and factor $\frac{1}{2}$. The related $(d-1)$-simplex $T_0'$ 
is obtained by homothety of $T_0$ in $G(T_0)=\frac{A_0+G(F_0)}{2}$ and with homothety factor $-(d-1)$. Observe that the $(d-2)$-dimensional facets of $T_0'$ (ridges of $T$) pass through the vertices of $T_0$ and are parallel to the $(d-2)$-dimensional facets of $T_0$. 

Now, the $d$ $M$-hyperplanes previously considered are parallel to the hyperplanes defined by the facets of the $d$-simplex \[{\rm conv} \{M \cup T_0'\}\] through the vertex $M$. Therefore, these $M$-hyperplanes are in general position, intersecting only in the Monge point $N_M$ which, by definition, is contained in every defined $M$-hyperplane. 

\end{proof}

Another theorem concerning the Monge point in Euclidean space is the Mannheim theorem, see \cite{c2003:gmatmpott} for the three-dimensional case and \cite{bb2005:tmpat3psoans} for generalizations. It is our Theorem \ref{thm:mannheim} above, and it presents an example of a statement that cannot be extended to Minkowski spaces. The simple reason is that hyperplane sections of Minkowskian balls need not be centrally symmetric. Therefore, in general the concept of Monge point of a $d$-simplex cannot be transferred to its facets.

%%%%%%%%%%%%%%%%%%%%%%%%%%%%%%%%%%%%%%%%%%%%%%%%%%%%%%%%%%%%%%%%%%%%%%%%%%%%%%%%%%%%%%%%%%%%%%%%%%%%%
\section{Euler lines and generalized Feuerbach spheres of Minkowskian simplices}\label{sec:euler}
%%%%%%%%%%%%%%%%%%%%%%%%%%%%%%%%%%%%%%%%%%%%%%%%%%%%%%%%%%%%%%%%%%%%%%%%%%%%%%%%%%%%%%%%%%%%%%%%%%%%%%%%

We define as \emph{Euler line associated to a circumcenter $M$} the straight line connecting $M$ with the centroid $G$. Thus, in the case of the centroid being a circumcenter, the associated Euler line is not well-defined. 
We now consider the situation in $d$-dimensional Minkowski space for $d\geq 2$.  

\begin{definition}\label{malt}
For a $d$-simplex $T:={\rm conv}\{A_0,\ldots,A_d\}$ with circumcenter $M$, define the \emph{complementary line of a facet with respect to $M$} as the translate of the line between the circumcenter $M$ of the simplex and the centroid of the facet, passing through the opposite vertex. If $A_1,\ldots,A_{d}$ are the vertices of the chosen facet with centroid $G_{0}$, then the complementary line is $A_0+t\cdot(G_{0}-M), t\in \mathbb{R}$.
\end{definition}

\begin{rem}
As in the planar case, for smooth norms such a circumcenter always exists (see \cite{m1992:isoacb}, and \cite[\S 7.1]{msw2001:tgomsas1}). For a non-smooth norm, simplices without a circumcenter may exist (see again Figure \ref{fig:circumcenters0} (right) for the planar situation, and it is easy to construct examples also for general $d$).
\end{rem}

The following theorem is an easy consequence of the definition of the centroid.

\begin{thm}\label{cpoint}
The complementary lines of the facets of a $d$-simplex $T$ with respect to a fixed circumcenter $M$ connect all the vertices to the same point, the \emph{complementary point} $P_M$ associated to $M$.
\end{thm}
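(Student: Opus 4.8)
The plan is to compute the point at which each complementary line meets a fixed reference, and show it is independent of the facet. I would parametrize things by the circumcenter $M$ and the vertices $A_0,\dots,A_d$, and use the explicit formula for the centroid of a facet: the facet opposite $A_i$ has centroid
\[
G_i = \frac{1}{d}\sum_{j\neq i} A_j = \frac{1}{d}\Bigl(\sum_{j=0}^d A_j - A_i\Bigr).
\]
The complementary line of this facet is $\ell_i(t) = A_i + t\,(G_i - M)$ for $t \in \mathbb{R}$.

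**First I would** guess the common point from the case $t$ chosen so that the contribution of the individual vertex $A_i$ cancels in favor of the global centroid $G = \frac{1}{d+1}\sum_{j=0}^d A_j$. Writing $\Sigma := \sum_{j=0}^d A_j = (d+1)G$, we have $G_i - M = \frac{1}{d}(\Sigma - A_i) - M = \frac{1}{d}\bigl(\Sigma - dM - A_i\bigr)$, so
\[
\ell_i(t) = A_i + \frac{t}{d}\bigl(\Sigma - dM\bigr) - \frac{t}{d}A_i = \Bigl(1 - \frac{t}{d}\Bigr)A_i + \frac{t}{d}\bigl(\Sigma - dM\bigr).
\]
The coefficient of $A_i$ vanishes precisely at $t = d$, and then $\ell_i(d) = \Sigma - dM$, which visibly does not depend on $i$. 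Hence every complementary line passes through
\[
P_M := \Sigma - dM = \sum_{j=0}^d A_j - dM = M + \sum_{j=0}^{d}(A_j - M) + (A\text{-free terms}),
\]
more cleanly $P_M = M + (d+1)(G - M) = M + \bigl((d+1)G - (d+1)M\bigr)$, i.e. $P_M = (d+1)G - dM$. Since each $\ell_i$ is a genuine line (the direction $G_i - M$ is nonzero as $M$ is a circumcenter and the $G_i$ are distinct from $M$ for a nondegenerate simplex — or one simply notes the statement is about the affine lines), this establishes concurrency.

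**The only thing to be careful about** is the degenerate possibility that $G_i = M$ for some $i$, which would make $\ell_i$ ill-defined as a line. This cannot happen for all $i$ simultaneously (that would force all facet centroids to coincide), and in fact if $M$ is a circumcenter of a genuine $d$-simplex one checks $G_i \neq M$; but even granting one bad index, the remaining $d$ complementary lines still concur at $P_M$, and $P_M$ still lies on the affine hull defining the missing "line", so no real obstacle arises. **I expect the main (very mild) obstacle** to be purely bookkeeping: keeping the affine-combination coefficients summing to $1$ throughout, so that every expression is a legitimate point rather than a free vector. Once the substitution $t = d$ is made the result drops out immediately, which is why the theorem is billed as "an easy consequence of the definition of the centroid." I would close by recording the formula $P_M = (d+1)G - dM$ (equivalently $P_M = M + (d+1)(G-M)$), since it parallels the earlier corollary $N_M = M + \frac{1}{d-1}\sum_{i}(A_i - M)$ and will be convenient when relating the complementary point, the Monge point, and the Euler line in the sequel.
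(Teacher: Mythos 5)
Your proposal is correct and is essentially the paper's own argument: the paper defines $P_M = M + \sum_{i=0}^d (A_i - M) = \sum_i A_i - dM$ and verifies directly that $P_M = A_j + d(G_j - M)$ for every $j$, which is exactly your substitution $t = d$ into $\ell_j(t)$. Your extra remark on the degenerate case $G_i = M$ and the closing formula $P_M = (d+1)G - dM$ are consistent with what the paper uses later.
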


\begin{proof}
Let $T={\rm conv}\{A_0,\ldots,A_d\}$, and let $G_j$ denote the centroid of the facet opposite vertex $A_j$. Then the point
\[P_M=M+\sum_{i=0}^{d} (A_i-M)= A_j + d\left(\frac{\displaystyle\sum_{\substack{i=0\\i\neq j}}^{d}A_i}{d}-M\right) = A_j+d(G_j-M) \textnormal{ for each } j=0,\ldots d,\]
lies on each complementary line. For each $j=0,\ldots,d$, we have $\|[P_MA_j]\|=d\|MG_j\|$. 
\end{proof}

Various useful types of orthogonalities have been defined in Minkowski spaces for pairs of vectors, all coinciding with the usual orthogonality in Euclidean space, yet we only have normality as a concept for vectors and (hyper-)planes. We call each segment $[P_MA_j]$ on a complementary line the \emph{complementary segment} associated to the opposite facet. As such, a complementary segment is not orthogonal to a hyperplane in any known sense. However, in dimension two we obtain the familiar isosceles orthogonality between an edge of a simplex (triangle side) and the corresponding complementary segment (orthogonality if we are in the Euclidean plane!), and the complementary point is the \emph{$C$-orthocenter} \cite{ag1960:otgomp,ms2007:tfcaoinp}. Unlike the $C$-orthocenter, the notion of complementary point generalizes to any higher dimension. 

\begin{rem}
The complementary point is even an affine notion, as we only used division ratios of segments on a line. The point $P_M$ can be constructed for any point $M$ (circumcenter or not) in the following way: take the line connecting $M$ to the centroid of a simplex facet (if distinct from $M$), and then consider the translated line passing through the vertex opposite the chosen facet. All lines of the latter kind intersect in a point (denoted $P_M$ in the present article), which has already been observed by Snapper \cite{s1981:aagotel}.
\end{rem}

The complementary point 
and Monge point associated to a simplex with circumcenter $M$ possess the following properties. 

\begin{thm}\label{thm:eulerline}
Let $T$ be a $d$-simplex ($d\geq 2$) in Minkowskian space $(\mathbb{R}^d,\|\cdot\|)$, with a circumcenter $M$ distinct from its centroid $G$.\\
(a) The associated complementary point $P_M$ and the Monge point $N_M$ lie on the Euler line $\langle MG\rangle$.\\ 
(b)  The centroid $G$ divides the segment $[MP_M]$ internally in the ratio $1:d$.\\
(c) The associated Monge point $N_M$ divides the segment $[MP_M]$ internally in the ratio $1:(d-2)$.\\
(d) The centroid $G$ divides the segment $[MN_M]$ internally in the ratio $(d-1):2$. 
\end{thm}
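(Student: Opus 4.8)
The plan is to reduce everything to the explicit formulae already obtained for $G$, $N_M$, and $P_M$, so that (a)--(d) become one-line computations with affine combinations. Recall from the preceding results that
\[
G = M + \frac{\sum_{i=0}^d (A_i - M)}{d+1}, \qquad
N_M = M + \frac{\sum_{i=0}^d (A_i - M)}{d-1}, \qquad
P_M = M + \sum_{i=0}^d (A_i - M).
\]
Write $S := \sum_{i=0}^d (A_i - M)$ for brevity. Since $M \neq G$, we have $S \neq 0$, and all three points $G$, $N_M$, $P_M$ lie on the line $M + tS$, $t \in \mathbb{R}$, which is precisely the Euler line $\langle MG\rangle$; this is part (a). The values of the parameter $t$ are $\tfrac{1}{d+1}$, $\tfrac{1}{d-1}$, and $1$ respectively.

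For the division ratios I would just compare these parameter values. For (b), $G$ divides $[M P_M]$ in the ratio $t_G : (1 - t_G) = \tfrac{1}{d+1} : \tfrac{d}{d+1} = 1 : d$. For (c), $N_M$ divides $[M P_M]$ in the ratio $t_{N} : (1 - t_N) = \tfrac{1}{d-1} : \tfrac{d-2}{d-1} = 1 : (d-2)$ --- here one should note that for $d = 2$ the Monge point coincides with $P_M$ (the ratio degenerates to $1:0$), consistent with the plane case, and for $d \geq 3$ it is a genuine internal point. For (d), $G$ divides $[M N_M]$ in the ratio $t_G : (t_N - t_G)$; substituting, $t_N - t_G = \tfrac{1}{d-1} - \tfrac{1}{d+1} = \tfrac{2}{(d-1)(d+1)}$ while $t_G = \tfrac{1}{d+1} = \tfrac{d-1}{(d-1)(d+1)}$, giving the ratio $(d-1) : 2$. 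All four assertions then follow.

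Alternatively, and perhaps more in the spirit of the paper's synthetic proofs, one can avoid the closed-form expression for $P_M$ by reading (b) directly off Theorem~\ref{cpoint}: the complementary line through $A_0$ is the translate of $\langle M G_0\rangle$, and since $G$ lies on the quasi-median-type segment $M$--$G_0$--(something)$\dots$ more simply, the relation $P_M - A_0 = d(G_0 - M)$ from the proof of Theorem~\ref{cpoint} together with $G - M = \tfrac{1}{d+1}\big((A_0 - M) + d(G_0 - M)\big) = \tfrac{1}{d+1}(P_M - M)$ gives (b) at once; then (c) follows from part (d) of the Theorem proving $\|[MN(F)]\| : \|[G(F)N_M]\| = (d-1):2$ combined with the fact that $N_M$, $M$, $G$ are collinear, i.e. $N_M - M = \tfrac{d+1}{d-1}(G - M) = \tfrac{1}{d-1}(P_M - M)$; and (d) is immediate from comparing the two expressions $G - M = \tfrac{1}{d+1}(P_M - M)$ and $N_M - M = \tfrac{1}{d-1}(P_M - M)$.

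Honestly there is no real obstacle here: once the three points are all expressed as $M + tS$ with explicit scalars $t$, every claim is a triviality about points on a line, and the only thing requiring a word of care is the degenerate case $d=2$ in part (c), where ``internally in the ratio $1:0$'' should be read as $N_M = P_M$ (the $C$-orthocenter of a triangle), which matches the known planar picture. I would therefore keep the write-up to a few lines, citing the corollary that gives $N_M$ and Theorem~\ref{cpoint} that gives $P_M$, and simply exhibit the parameters.
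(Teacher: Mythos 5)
Your proposal is correct and follows essentially the same route as the paper: both write $G$, $N_M$, $P_M$ as $M$ plus scalar multiples of $\sum_{i=0}^d(A_i-M)$ and read off collinearity and the division ratios by comparing the scalars. Your remark about the degenerate reading of the ratio $1:(d-2)$ when $d=2$ (where $N_M=P_M$) is a sensible addition the paper leaves implicit, but the argument is otherwise identical.
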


\begin{proof}
Let $T={\rm conv}\{A_0,\ldots,A_d\}$.
That the Euler line $\langle GM\rangle$ associated to $M$ passes through $N_M$ and $P_M$ can be seen from the following equations:
\begin{eqnarray*}
G&=&\frac{\sum_{i=0}^{d} A_i}{d+1} = M+\frac{\sum_{i=0}^{d} (A_i-M)}{d+1},\\
N_M&=&M+\frac{\sum_{i=0}^{d} (A_i-M)}{d-1},\\
P_M&=&M+\sum_{i=0}^{d} (A_i-M).
\end{eqnarray*} 
Thus (a) is proved. The above equations also immediately prove (b) and (c). 
Proving (d) is an easy exercise in arithmetic:
\begin{eqnarray*}
\|G-M\|\colon\|N_M-G\|&=&\left\|\frac{\sum_{i=0}^{d} (A_i-M)}{d+1}\right\|\colon \left\|\frac{\sum_{i=0}^{d} (A_i-M)}{d-1}-\frac{\sum_{i=0}^{d} (A_i-M)}{d+1}\right\|\\
&=&\left\|\frac{\sum_{i=0}^{d} (A_i-M)}{d+1}\right\|\colon \left\|\frac{2 \sum_{i=0}^{d} (A_i-M)}{(d-1)(d+1)}\right\|\\
&=&(d-1)\colon 2.
\end{eqnarray*}
\end{proof} 

\begin{rem}
We see that $N_M$ can be obtained by homothety of $M$ from center $G$, with homothety ratio $-\frac{2}{d-1}$. Moreover, recall the $M$-hyperplanes from Corollary \ref{cor:monge} which intersect in $N_M$. The above homothety takes each $M$-hyperplane to a certain parallel hyperplane through $M$. It turns out that these \emph{central planes} (through the circumcenter $M$) encompass the supporting hyperplanes through $M$ of the auxiliary simplex ${\rm conv} \{M \cup T_0'\}$ in the proof of Corollary \ref{cor:monge}.
\end{rem}

Considering the points of interest in Theorem \ref{thm:eulerline}, one may ask whether the point $M+\frac{\sum_{i=0}^{d} (A_i-M)}{d}$ on the Euler line, dividing $[MP_M]$ internally in the ratio $1:(d-1)$, holds any special meaning. It turns out that it is the center of a sphere analogous to the well-known  Feuerbach circle of a triangle in the Euclidean plane. 
The extension to higher dimensional normed spaces for the case $M\neq G$ is as follows (for the ``degenerate'' case $M=G$ we refer to Corollary \ref{cor:coincidence}).

\begin{thm}\label{thm:feuerbach}(The $2(d+1)$- or Feuerbach sphere of a $d$-simplex)
In an arbitrary Minkowski $d$-space, let $T={\rm conv}\{A_0,\ldots,A_{d}\}$ be a $d$-simplex with a circumcenter $M$ and circumradius $R$, and let $G (\neq M)$ be its centroid. The sphere with center $F_M:=M+\frac{\sum{(A_i-M)}_{i=1}^{d+1}}{d}$ on the Euler line and of radius $r:=\frac{R}{d}$ passes through the following $2(d+1)$ points:\\
(a) the centroids $G_i$, $i=0,\ldots,d$, of the facets $F_i$ of $T$ ($F_i$ is opposite vertex $A_i$), and\\
(b) the points $L^M_{i}$ dividing the segments connecting the Monge point $N_M$ to the vertices $A_i$ of $T$, $i=0,\ldots,d$, in the ratio $1:(d-1)$. \\
Moreover, $S(F_M,r)$ is a homothet of the circumsphere $S(M,R)$ with respect to the centroid $G$ and homothety ratio $\frac{-1}{d}$, i.e., $G$ divides the segment $[F_MM]$ internally in the ratio $1:d$, and $F_M$ divides the segment $[N_MM]$ internally in the ratio $1:(d-1)$.
\end{thm}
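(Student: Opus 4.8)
The plan is to realise both families of $d+1$ points — the facet centroids $G_i$ and the points $L^M_i$ — as images of the vertices $A_0,\dots,A_d$ under homotheties that send the circumcenter $M$ to $F_M$. Since $A_0,\dots,A_d$ all lie on $S(M,R)$, and since a homothety with ratio $\lambda$ scales every vector, hence every Minkowskian distance, by $|\lambda|$ — here one uses only that the unit ball is centrally symmetric, so $\|\lambda v\|=|\lambda|\,\|v\|$ — such a homothety carries $S(M,R)$ onto a Minkowskian sphere centered at the image of $M$. This is the one place where a property of the norm, rather than a purely affine fact, is invoked; everything else is bookkeeping with barycentric identities, so the negative homothety ratio causes no trouble.

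First I would record the affine identities. Writing $G=\frac{1}{d+1}\sum_{i=0}^{d}A_i$ gives $G_i=\frac{(d+1)G-A_i}{d}$, and from the formulas of Theorem~\ref{thm:eulerline} together with the definition of $F_M$ (that is, $F_M=M+\frac1d\sum_{i=0}^{d}(A_i-M)$) one gets $F_M=\frac{(d+1)G-M}{d}=M+\frac{d+1}{d}(G-M)$ and $N_M=\frac{(d+1)G-2M}{d-1}$. In particular $F_M$ lies on the Euler line $\langle MG\rangle$, since $F_M-M$ is a scalar multiple of $G-M$. Short computations then yield $F_M-G=\frac{1}{d}(G-M)$ and $F_M-N_M=\frac{1}{d}(M-N_M)$; because these vectors point in the directions making $G$ lie between $F_M$ and $M$, and $F_M$ lie between $N_M$ and $M$, they are precisely the asserted \emph{internal} division ratios $1:d$ of $[F_M M]$ by $G$ and $1:(d-1)$ of $[N_M M]$ by $F_M$.

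Next I would compute the two key differences. Substituting the identities above gives
\[
G_i-F_M=-\frac{1}{d}(A_i-M),\qquad L^M_i-F_M=\frac{1}{d}(A_i-M),
\]
where $L^M_i=N_M+\frac{1}{d}(A_i-N_M)$ is the point dividing $[N_M A_i]$ in ratio $1:(d-1)$. Taking norms and using $\|A_i-M\|=R$ shows $\|G_i-F_M\|=\|L^M_i-F_M\|=\frac{R}{d}=r$, which is (a) and (b). The same two displays exhibit the homothety with center $G$ and ratio $-\frac1d$ sending $M\mapsto F_M$, $A_i\mapsto G_i$, and the homothety with center $N_M$ and ratio $\frac1d$ sending $M\mapsto F_M$, $A_i\mapsto L^M_i$; either one presents $S(F_M,r)$ as a homothet of the circumsphere $S(M,R)$, the first with exactly the ratio $-\frac1d$ claimed in the statement. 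A pleasant by-product worth recording is $G_i-F_M=-(L^M_i-F_M)$: the points $G_i$ and $L^M_i$ are antipodal on $S(F_M,r)$, so the $2(d+1)$ points are genuinely distinct and fall into $d+1$ diametral pairs.

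I do not foresee a real obstacle: the proof is essentially algebra with barycentric coordinates plus the scaling property of the norm. The only points requiring a little care are keeping the index conventions straight for the definition of $F_M$, checking the orientations of the vectors so that the three division ratios come out \emph{internal}, and noting that the degenerate case $M=G$ is deliberately excluded here and handled separately in Corollary~\ref{cor:coincidence}.
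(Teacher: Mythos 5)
Your proposal is correct and follows essentially the same route as the paper: both reduce (a) and (b) to the identities $G_i-F_M=\frac{1}{d}(M-A_i)$ and $L^M_i-F_M=\frac{1}{d}(A_i-M)$, take norms using $\|A_i-M\|=R$, and verify the division ratios on the Euler line by the same barycentric algebra. Your explicit framing via the two homotheties (center $G$, ratio $-\frac{1}{d}$, and center $N_M$, ratio $\frac{1}{d}$) and the observation that $G_i$ and $L^M_i$ are antipodal on $S(F_M,\frac{R}{d})$ are pleasant additions, but not a different method.
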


\begin{rem}
In analogy to the Feuerbach circle in the plane centered at the nine-point-center, we call $F_M$ the \emph{$2(d+1)$-center} of the simplex with respect to the circumcenter $M$, and $S(F_M, \frac{R}{d})$ its \emph{Feuerbach} or \emph{$2(d+1)$-sphere}.
\end{rem}

\begin{proof}
The centroid of a facet opposite vertex $A_j$ is $G_j=\frac{\displaystyle\sum_{\substack{i=0\\i\neq j}}^{d}A_i}{d}$. We have $R=\|A_j-M\|$ for any $j=0,\ldots, d$, and thus 
\[\|G_j-F_M\|=\left\|\frac{\sum_{\substack{i=0\\i\neq j}}^{d}A_i}{d}-M-\frac{\sum_{i=0}^{d}(A_i-M)}{d}\right\|=\left\|\frac{M-A_j}{d}\right\|=\frac{R}{d},\] which proves that $S(F_M,\frac{R}{d})$ passes through the points in (a).

The Monge point is $N_M=M+\frac{\sum_{i=0}^{d} (A_i-M)}{d-1}$, thus 

\begin{eqnarray*} L^M_{j}&:=&M+\frac{\sum_{i=0}^{d} (A_i-M)}{d-1}+\frac{A_j-M-\frac{\sum_{i=0}^{d} (A_i-M)}{d-1}}{d}\\
&=& M+\frac{(d-1)\sum_{i=0}^{d} (A_i-M)}{d(d-1)}-\frac{M-A_j}{d}\\
&=& M+\frac{\sum_{i=0}^{d} (A_i-M)}{d}-\frac{M-A_j}{d}.
\end{eqnarray*}

Therefore,
\begin{eqnarray*}
\|L^M_j-F_M\|&=&\left\|M+\frac{\sum_{i=0}^{d} (A_i-M)}{d}-\frac{M-A_j}{d}-M-\frac{\sum_{i=0}^{d}(A_i-M)}{d}\right\|\\
&=&\left\|-\frac{M-A_j}{d}\right\|=\frac{R}{d},
\end{eqnarray*}

which proves that $S(F_M,\frac{R}{d})$ passes through the points in (b). We also have 

\begin{eqnarray*}
\|F_M-G\|\colon\|G-M\|&=&\frac{\left\|M+\frac{\sum_{i=0}^{d}(A_i-M)}{d}-M-\frac{\sum_{i=0}^{d}(A_i-M)}{d+1}\right\|}{\left\|M+\frac{\sum_{i=0}^{d}(A_i-M)}{d+1}-M\right\|}\\
&=&\left\|\frac{\sum_{i=0}^{d}(A_i-M)}{d(d+1)}\right\|\colon \left\|\frac{\sum_{i=0}^{d}(A_i-M)}{d+1}\right\|=1\colon d
\end{eqnarray*}

and

\begin{eqnarray*}
\|N_M-F_M\|\colon\|F_M-M\|&=&\frac{\left\|M+\frac{\sum_{i=0}^{d}(A_i-M)}{d-1}-M-\frac{\sum_{i=0}^{d}(A_i-M)}{d}\right\|}{\left\|M+\frac{\sum_{i=0}^{d}(A_i-M)}{d}-M\right\|}\\
&=&\left\|\frac{\sum_{i=0}^{d}(A_i-M)}{d(d-1)}\right\|\colon \left\|\frac{\sum_{i=0}^{d}(A_i-M)}{d}\right\|=1\colon (d-1),
\end{eqnarray*}

proving the remaining statements. 
\end{proof}

\begin{figure}
\begin{center}
\includegraphics[width=\textwidth]{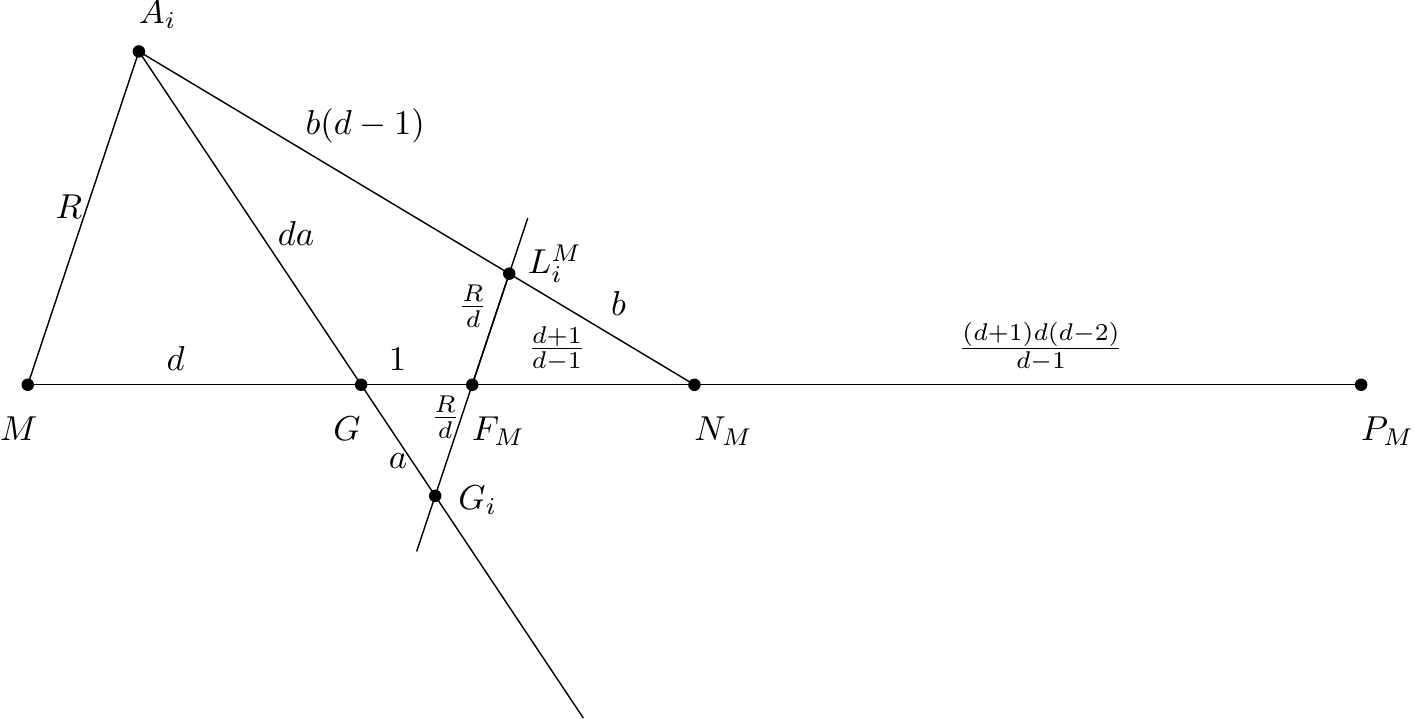}
\end{center}
\caption{Points on the Euler line and Feuerbach sphere, and ratios of line segments.}
\label{fig:euler1}
\end{figure}

\begin{rem}
As noted in the Introduction, the sphere construction has been done for the Euclidean case in several earlier works, giving a $3(d+1)$-sphere. In Minkowski space, we ''lose'' the $(d+1)$ points which are orthogonal projections of the $L^M_i$ onto the facets $F_i$. In the planar case, this has already been pointed out in \cite{ag1960:otgomp, ms2007:tfcaoinp}.
\end{rem}
\begin{rem}\label{rem:spatial}
Consider the $(d+1)$-dimensional spatial representation of this configuration where the segments between $M$ and the vertices of our simplex are projections of some segments spanning a $(d+1)$-dimensional parallelepiped. Then, the Euler line $\langle MP_M \rangle$ corresponds to the projection of the main diagonal of the parallelepiped, and the points dividing the main diagonal in the ratio $1:d$, $1:(d-1)$, and $1:(d-2)$ project to the centroid, the center of the Feuerbach-$2(d+1)$-sphere, and the Monge point, respectively. 
\end{rem}

Since it can be shown that $N_M$ divides the segment $[F_MM]$ externally in the ratio $1:d$, i.e., $[F_MM]$ is divided harmonically by $G$ and $N_M$, we obtain the following corollary, the second statement of which has been noted in \cite{bb2004:aotnpcfons} for Euclidean orthocentric simplices and the orthocenter. For a strictly convex normed plane ($d=2$), the second statement can be found in \cite[Theorem 4.6]{ms2007:tfcaoinp}. 

\begin{cor}
The Monge point $N_M$ associated to a circumcenter $M$ of a $d$-simplex $T$ is the center of homothety between the Feuerbach-$2(d+1)$-sphere centered at $F_M$ and the circumsphere centered at $M$, with homothety ratio $1:d$. For any line from $N_M$ meeting the associated circumsphere of $T$ in $Q$, the point $P$ dividing $[N_MQ]$ internally in the ratio $1:(d-1)$ is located on the Feuerbach sphere; conversely, for any line from $N_M$ meeting the associated Feuerbach sphere in $P$, the point $Q$ dividing the segment $[N_MP]$ externally in the ratio $d:(d-1)$ is located on the circumsphere of $T$.
\end{cor}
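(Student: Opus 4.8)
The plan is to deduce all three assertions from a single homothety. First I would record the positions of the relevant points on the Euler line: writing each as $M + t\,v$ with $v:=\sum_{i=0}^{d}(A_i-M)$, the points $M$, $G$, $F_M$, $N_M$, $P_M$ correspond to the parameters $0$, $\tfrac{1}{d+1}$, $\tfrac{1}{d}$, $\tfrac{1}{d-1}$, $1$, as already computed in the proofs of Theorems \ref{thm:eulerline} and \ref{thm:feuerbach}. From these values it is immediate that the ordering along the Euler line is $M,G,F_M,N_M$, that $N_M$ lies \emph{outside} the segment $[F_MM]$, and that it divides $[F_MM]$ in the ratio $1:d$ measured from $F_M$, externally; together with the fact that $G$ divides $[F_MM]$ internally in the ratio $1:d$ (Theorem \ref{thm:feuerbach}), this is the harmonic-conjugate statement quoted just before the corollary.

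Next I would introduce the homothety $h$ with center $N_M$ and ratio $\tfrac{1}{d}$. Using the parameter values, $h$ sends $M$ (parameter $0$) to the point of parameter $\tfrac{1}{d-1}\bigl(1-\tfrac{1}{d}\bigr)=\tfrac{1}{d}$, that is, to $F_M$; since a homothety of ratio $\tfrac{1}{d}$ scales every radius by $\tfrac{1}{d}$, it carries the circumsphere $S(M,R)$ onto $S\bigl(F_M,\tfrac{R}{d}\bigr)$, which is precisely the Feuerbach sphere of Theorem \ref{thm:feuerbach}. Because $N_M$ lies outside $[F_MM]$, it is the external center of similitude of the two spheres and the ratio is the positive number $\tfrac{1}{d}$; this proves the first assertion.

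The remaining two assertions are just the effect of $h$ and $h^{-1}$ on a point. If a line through $N_M$ meets $S(M,R)$ in $Q$, then $h(Q)=N_M+\tfrac{1}{d}(Q-N_M)$ lies between $N_M$ and $Q$, dividing $[N_MQ]$ internally in the ratio $\tfrac{1}{d}:\tfrac{d-1}{d}=1:(d-1)$, and $h(Q)\in h\bigl(S(M,R)\bigr)=S\bigl(F_M,\tfrac{R}{d}\bigr)$; this is the second assertion. Conversely $h^{-1}$ has center $N_M$ and ratio $d$, so if a line through $N_M$ meets the Feuerbach sphere in $P$, then $h^{-1}(P)=N_M+d(P-N_M)$ lies beyond $P$, dividing $[N_MP]$ externally in the ratio $d:(d-1)$, and it lies on $h^{-1}\bigl(S(F_M,\tfrac{R}{d})\bigr)=S(M,R)$. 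I do not expect a genuine obstacle; the only delicate point is keeping straight internal versus external division and the sign of the homothety ratio — one must note that $N_M$ is the external (not the internal) center of similitude, so the ratio is $+\tfrac{1}{d}$, in contrast to the ratio $-\tfrac{1}{d}$ for the homothety centered at $G$ recorded after Theorem \ref{thm:feuerbach}.
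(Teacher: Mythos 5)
Your proposal is correct and follows essentially the same route the paper intends: it establishes that $N_M$ divides $[F_MM]$ externally in the ratio $1:d$ (the fact the paper states just before the corollary) and then reads off all three assertions from the homothety with center $N_M$ and ratio $\tfrac{1}{d}$ mapping $S(M,R)$ onto $S(F_M,\tfrac{R}{d})$. The parameter computations and the internal/external division ratios all check out, including the observation that the ratio here is $+\tfrac{1}{d}$ as opposed to the ratio $-\tfrac{1}{d}$ for the homothety centered at $G$.
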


We conclude this Section with a Corollary which is an immediate consequence of the affine nature of Theorem \ref{thm:eulerline}. 
\begin{cor}\label{cor:coincidence}
In a $d$-simplex in Minkowskian $d$-space, the points $M$, $G$, $F_M$, $N_M$, $P_M$ are either collinear (on the Euler line), or they all coincide. 
\end{cor}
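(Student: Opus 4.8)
The plan is to observe that every one of the five points in question has been given, in the course of Theorems \ref{cpoint}, \ref{thm:eulerline}, and \ref{thm:feuerbach}, as an explicit affine combination of $M$ and the vertices $A_0,\dots,A_d$, all of the same shape. Concretely, I would introduce the single vector
\[
v:=\sum_{i=0}^{d}(A_i-M),
\]
and recall from those results that
\[
G=M+\frac{v}{d+1},\qquad N_M=M+\frac{v}{d-1},\qquad F_M=M+\frac{v}{d},\qquad P_M=M+v .
\]
Thus $G-M$, $N_M-M$, $F_M-M$, and $P_M-M$ are all scalar multiples of $v$; no properties of the norm enter, which is exactly the ``affine nature'' referred to in the statement.

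Next I would split into the two cases according to whether $v=0$. If $v\neq 0$, then in particular $G-M=\frac{v}{d+1}\neq 0$, so the Euler line $\langle MG\rangle$ is well-defined and equals $\{M+tv:\ t\in\mathbb{R}\}$; since $N_M$, $F_M$, and $P_M$ are of the form $M+tv$ with $t=\frac{1}{d-1},\ \frac{1}{d},\ 1$ respectively, all five points lie on this line, and they are pairwise distinct because $d\ge 2$ forces $\frac{1}{d+1},\frac{1}{d-1},\frac{1}{d},1$ to be four distinct nonzero scalars (with $0$ corresponding to $M$). If $v=0$, then each of the displayed formulas collapses to $M$, so $G=N_M=F_M=P_M=M$ and the five points coincide. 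Note that $v=0$ is precisely the excluded situation $M=G$ of Theorems \ref{thm:eulerline} and \ref{thm:feuerbach}, so the two cases of the corollary match the hypothesis split used there; this is worth remarking explicitly.

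There is essentially no obstacle here: the only thing to be careful about is not to invoke the Euler line before knowing it exists (hence the case distinction on $v$), and to make sure the formula for $F_M$ is read with the index convention of Theorem \ref{thm:feuerbach} (where the sum runs over $i=1,\dots,d+1$, i.e.\ over all vertices). Everything else is a one-line computation, already carried out in the proofs cited above.
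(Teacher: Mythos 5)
Your argument is correct and is exactly what the paper intends: Corollary \ref{cor:coincidence} is stated as an immediate consequence of the explicit formulas $G=M+\frac{v}{d+1}$, $F_M=M+\frac{v}{d}$, $N_M=M+\frac{v}{d-1}$, $P_M=M+v$ with $v=\sum_{i=0}^d(A_i-M)$, and your case split on $v=0$ versus $v\neq 0$ is the whole content. One small slip in your added remark: the five points are \emph{not} always pairwise distinct when $v\neq 0$, since for $d=2$ one has $\frac{1}{d-1}=1$ and hence $N_M=P_M$ (consistent with the paper's observation that in the plane the Monge point is the $C$-orthocenter, i.e.\ the complementary point); this does not affect the corollary, which only asserts collinearity, but the distinctness claim should be dropped or restricted to $d\geq 3$.
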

In the latter case, instead of speaking of the Euler line not being well-defined, sometimes the term \emph{collapsing Euler line} is used.

%%%%%%%%%%%%%%%%%%%%%%%%%%%%%%%%%%%%%%%%%%%%%%%%%%%%%%%%%%%%
\section{Generalizations for polygons in the plane}\label{sec:polygons}
%%%%%%%%%%%%%%%%%%%%%%%%%%%%%%%%%%%%%%%%%%%%%%%%%%%%%%%%%%%%

Generalizations of the concept of Euler line and Feuerbach circle have not just focused on raising the dimension of the space; there have also been attempts to generalize to polygons. We will now see that easy generalizations arise if we consider such polygons as projections of higher-dimensional simplices or sections of parallelepipeds. This relates to \emph{descriptive geometry} (see also Remark \ref{rem:spatial}).

B. Herrera G\'omez \cite{hg2014:tfcatorcotcp} and S.N. Collings \cite{c1967:cpatel} have written about \emph{remarkable circles} in connection with \emph{cyclic polygons} in the Euclidean plane. Their definition of \emph{cyclic polygon} as a polygon possessing a circumcircle is directly extendable to any normed plane. Necessarily, cyclic polygons are convex.  

Let $P={\rm conv}\{A_0,\ldots,A_d\}$, $d\geq 3$, be a cyclic polygon with circumcenter $M$ in the normed plane $(\mathbb{R}^2,\|\cdot\|)$. We may view the vertices of $P$ as the images under projection of certain vertices of a $(d+1)$-dimensional parallelepiped $Q$ in $(d+1)$-dimensional space to an affine plane (which we then endow with the norm $\|\cdot\|$), namely the vertices adjacent to $M'$ where $M'$ projects to $M$ (compare Remark \ref{rem:spatial}). This makes $P$ the projection of that hyperplane section $P'$ of $Q$ which is defined by all the vertices adjacent to $M'$. Alternatively, we may view $P$ as the shadow of a $d$-simplex $T$, which itself is a projection of the hyperplane section $P'$ of $Q$ to an affine $(d-1)$-subspace.

We now define the points $P_M$ (\emph{complementary point}), $N_M$ (\emph{Monge point}), $G$ (\emph{centroid}), $F_M$ (\emph{$2(d+1)$-center}) of the polygon to be the respective parallel projections of the following distinguished points on the main diagonal of the parallelepiped, which would have the corresponding meaning for the $d$-simplex $T$ when $M'$ projects to a circumcenter of $T$, see Section \ref{sec:euler}. That is, 

\begin{eqnarray*}
G&=&\frac{\sum_{i=0}^{d} A_i}{d+1} = M+\frac{\sum_{i=0}^{d} (A_i-M)}{d+1} \quad \textnormal{ is called the \emph{centroid} of the polygon } P,\\
F_M&=&M+\frac{\sum_{i=0}^{d} (A_i-M)}{d} \quad \textnormal{ is called the \emph{$2(d+1)$-center} of the polygon } P,\\
N_M&=&M+\frac{\sum_{i=0}^{d} (A_i-M)}{d-1} \quad \textnormal{ is called the \emph{Monge point} of the polygon } P,\\
P_M&=&M+\sum_{i=0}^{d} (A_i-M) \quad \textnormal{ is called the \emph{complementary point} of the polygon } P.
\end{eqnarray*}

These points either coincide or are collinear on the \emph{Euler line} of the polygon $P$ (compare Corollary \ref{cor:coincidence}), with the division ratios given in Theorem \ref{thm:eulerline}. We can then easily deduce the following relationships.

\begin{thm}\label{thm:feuerbach_poly1}
Let $P={\rm conv}\{A_0,\ldots,A_d\}$, $d\geq 3$ be a cyclic polygon with circumcenter $M$ and circumradius $R$ in the normed plane $(\mathbb{R}^2,\|\cdot\|)$.
Then:\\
(a) The complementary point $P_M$ is common to all the circles $S(P_M^{i},R)$, $i=0,\ldots,d$, where $P_M^{i}$ is the complementary point of the subpolygon $P_i={\rm conv}\left(\{A_0,\ldots,A_d\}\setminus\{A_i\}\right)$ with respect to the circumcenter $M$.\\
(b) The lines $\langle A_iP_M^{i} \rangle$ are concurrent in $C_M$, where $C_M:=M+\frac{1}{2}\sum_{i=0}^d (A_i-M)$ is the midpoint of $[MP_M]$ and called the \emph{spatial center} of $P$ with respect to $M$. \\
(c) The midpoints $E_i$ of the segments joining the vertices $A_i$, $i=0,\ldots,d$, with the complementary point $P_M$ are concyclic in the circle $S(C_M,\frac{R}{2})$. \\
(d) The point $C_M$ is common to all the circles $S(C_M^{i},\frac{R}{2})$, where $C_M^{i}$ is the spatial center of the subpolygon $P_i$ with respect to the circumcenter $M$, $i=0,\ldots,d$, and the points $C_M^{i}$ also lie on the circle $S(C_M,\frac{R}{2})$.
\end{thm}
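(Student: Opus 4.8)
The plan is to reduce everything to two short vector identities and then read off (a)--(d) mechanically. Write $s_i:=A_i-M$, so that the circumcenter hypothesis becomes $\|s_i\|=R$ for all $i$, and put $S:=\sum_{i=0}^{d}s_i$. In this notation $P_M=M+S$ and $C_M=M+\tfrac12 S$, while for the subpolygon $P_i$ — which is itself cyclic with the \emph{same} circumcenter $M$, since its vertices all lie on $S(M,R)$, and whose ``dimension parameter'' is $d-1\ge 2$ precisely because $d\ge 3$ — the complementary point and spatial center are $P_M^{i}=M+\sum_{j\ne i}s_j=M+S-s_i$ and $C_M^{i}=M+\tfrac12(S-s_i)$, obtained from the general formulas of Section~\ref{sec:euler} by deleting the $i$-th summand and replacing $d$ by $d-1$. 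The first identity is then
\[
P_M-P_M^{i}=s_i,\qquad C_M-C_M^{i}=\tfrac12 s_i,
\]
and the second is that $C_M$ is simultaneously the midpoint of $[MP_M]$ (clear, as $C_M=\tfrac12(M+P_M)$) and, since $\tfrac12\bigl(A_i+P_M^{i}\bigr)=\tfrac12\bigl((M+s_i)+(M+S-s_i)\bigr)=M+\tfrac12 S=C_M$, the midpoint of $[A_iP_M^{i}]$ for every $i$.

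Granting these, part (a) is immediate: $\|P_M-P_M^{i}\|=\|s_i\|=R$, so $P_M\in S(P_M^{i},R)$ for each $i$. Part (b) follows from the midpoint identity: $C_M=\tfrac12(A_i+P_M^{i})$ lies on $\langle A_iP_M^{i}\rangle$ for every $i$ (in the degenerate case $A_i=P_M^{i}$, i.e.\ $S=2s_i$, all three points coincide and there is nothing to prove), hence all these lines pass through $C_M$; and the identity $C_M=\tfrac12(M+P_M)$ records that $C_M$ is the midpoint of $[MP_M]$, as asserted.

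For part (c), the midpoint of $[A_iP_M]$ is $E_i=\tfrac12(A_i+P_M)=\tfrac12\bigl((M+s_i)+(M+S)\bigr)=M+\tfrac12 S+\tfrac12 s_i$, so $E_i-C_M=\tfrac12 s_i$ and $\|E_i-C_M\|=\tfrac{R}{2}$; thus all $E_i$ lie on $S(C_M,\tfrac{R}{2})$. For part (d), the first identity gives $\|C_M-C_M^{i}\|=\tfrac12\|s_i\|=\tfrac{R}{2}$, which says at once that $C_M\in S(C_M^{i},\tfrac{R}{2})$ and that $C_M^{i}\in S(C_M,\tfrac{R}{2})$ for every $i$.

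I do not expect a genuine obstacle: once the points are written in the $s_i$-notation, each assertion is a one-line computation using only homogeneity $\|\lambda v\|=|\lambda|\,\|v\|$ of the norm together with $\|s_i\|=R$, so the theorem holds verbatim in an arbitrary normed plane. The only place requiring care is the bookkeeping for the subpolygons $P_i$: one must verify that $P_i$ is again cyclic with circumcenter $M$ and circumradius $R$, that its defining parameter is $d-1$ (which is why the hypothesis $d\ge 3$ is exactly what makes $P_i$ a polygon), and that $P_M^{i}$ and $C_M^{i}$ are the correct specializations of the general formulas — after which everything is routine. Conceptually, the reason the subpolygon data fit together so cleanly is the same as in Remark~\ref{rem:spatial}: the whole configuration is the parallel projection of the higher-dimensional Euler-line configuration of the simplices ${\rm conv}\bigl(\{A_0,\dots,A_d\}\setminus\{A_i\}\bigr)$, lifted one dimension further.
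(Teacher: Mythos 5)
Your proof is correct and follows essentially the same route as the paper's: the identities $P_M-P_M^{i}=A_i-M$ and $C_M-C_M^{i}=\tfrac12(A_i-M)$ are exactly the paper's computations for (a) and (d), and your explicit formula for $E_i$ gives (c) where the paper merely points at the figure. The only difference is in (b), where the paper argues via the spatial representation (the lines $\langle A_iP_M^{i}\rangle$ project from the main diagonals of the parallelepiped, which all meet at its center), whereas you verify directly that $C_M=\tfrac12(A_i+P_M^{i})$; this is the algebraic shadow of the same fact and, as you note, shows that at most one line can degenerate (since $S=2s_i=2s_j$ would force $A_i=A_j$), which is a slightly sharper accounting than the paper's.
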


\begin{proof}
We have \[P_M=M+\sum_{j=0}^d (A_j-M)=M+\sum_{\substack{j=0\\j\neq i}}^d (A_j-M)+(A_i-M)=P_M^{i}+(A_i-M).\]
Since $(A_i-M)$ is a radius of any translate of the circle $S(M,R)$, we obtain the statement in (a).
In the spatial representation in $(d+1)$-dimensional space, the vertex projecting to the complementary point $P_M$ is the endpoint opposite $M'$ of the main diagonal of the parallelepiped $Q$ (i.e., the line which projects to the Euler line), whereas the pre-images of the points $P_M^{i}$ are vertices adjacent to the pre-image of $P_M$. Thus the pre-images of each point $P_M^{i}$ and $A_i$, $i=0,\ldots,d$, together span another main diagonal of the parallelepiped $Q$. The main diagonals of the parallelepiped intersect in one point $C'$ (the centroid of the parallelepiped), and this point halves each main diagonal. The projection of this point is the point $C_M$ by definition, which proves part (b). Note that at most $d-1$ of the lines $\langle A_iP_M^{i} \rangle$ may not be well-defined, and precisely when their pre-images are parallel to the null space of the projection, but at least $2$ lines remain to determine the point $C_M$.   Part (d) is similar to part (a), in that
\[C_M=M+\frac{1}{2}\sum_{j=0}^d (A_j-M)=M+\frac{1}{2}\sum_{\substack{j=0\\j\neq i}}^d (A_j-M)+\frac{1}{2}(A_i-M)=C_M^{i}+\frac{1}{2}(A_i-M).\] The second statement in (d) follows trivially.
Finally, for part (c), consider Figure \ref{fig:euler1} and observe that the line $\langle C_M E_i\rangle$ is parallel to $\langle MA_i\rangle$ for each $i=0,\ldots,d$.
\end{proof}

\begin{rem}\label{rem:feuerbach1} For the Euclidean plane and $d=3$, part (d) is well known \cite[pp. 22--23]{y1968:gt2}. For all $d\geq 3$, the statements (a)--(c) have been established in \cite{hg2014:tfcatorcotcp} where $P_M$ is called the orthocenter, and $S(C_M,\frac{R}{2})$ the Feuerbach circle of the polygon. For strictly convex normed planes part (d) has been shown in \cite[Theorem 4.18]{ms2007:tfcaoinp}, calling the point $C_M$ the center of the Feuerbach circle $S(C_M,\frac{R}{2})$, and the circles $S(C_M^{i},\frac{R}{2})$ the Feuerbach circles of the subpolygons. The motivation in either case was to observe a radius half as long as the radius of the original circumcircle. We see that the statements extend in some way to \emph{all} Minkowski planes, though one has to be careful in their formulation; recall that in planes which are not strictly convex, we cannot necessarily speak of \emph{the (unique)} circumcircle, or \emph{the (unique)} intersection of several circles.
\end{rem}

\begin{rem}
Note that $M$ is a circumcenter of $P$, and also a circumcenter for each of its sub-polygons with $d\geq 3$ vertices. The analogous statement for a $d$-simplex in $d$-space is wrong, i.e., a circumcenter of a $d$-simplex $T$ is \emph{not} a circumcenter for each of its facets, which is the reason for the lack of analogous higher-dimensional statements involving the complementary points of facets of $T$ in Section \ref{sec:euler}.
\end{rem}

An alternative, equally plausible definition of (orthocenter and) Feuerbach circle of a polygon in the Euclidean plane was given by Collings \cite{c1967:cpatel}. This, too, generalizes to normed (Minkowski) planes, and is easily provable using the spatial representation given above. Both concepts of Feuerbach circles are illustrated in Figure \ref{fig:polygon}, for cyclic pentagons in the $\ell_1$-norm.

\begin{thm}\label{thm:feuerbach_poly2}
Let $P={\rm conv}\{A_0,\ldots,A_d\}$, $d\geq 3$, be a cyclic polygon with circumcenter $M$ and circumradius $R$ in the normed plane $(\mathbb{R}^2,\|\cdot\|)$.\\
(a) The Monge point $N_M$ is the point of intersection of the lines $\langle A_i N_M^{i}\rangle$, $i=0,\ldots,d$, where $N_M^{i}$ is the Monge point of the subpolygon $P_i={\rm conv}\left(\{A_0,\ldots,A_d\}\setminus\{A_i\}\right)$.\\
(b) The centroids $G_i$ of the subpolygons $P_i={\rm conv}\left(\{A_0,\ldots,A_d\}\setminus\{A_i\}\right)$, $i=0,\ldots,d$, are concyclic on $S(F_M,\frac{R}{d})$, where $F_M$ is the $2(d+1)$-center of the polygon. Furthermore, the circle $S(F_M,\frac{R}{d})$ passes through the $(d+1)$ points $L^{M}_i$ dividing the segments $[N_MA_i]$ in the ratio $1\colon (d-1)$.\\ 
(c) The Monge points $N_M^{i}$ of the subpolygons are concyclic on the circle \[S\left(M+\frac{1}{d-2}\sum_{j=0}^d (A_j-M),\frac{R}{d-2}\right)\] with its center on the Euler line.\\ 
\end{thm}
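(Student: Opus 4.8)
The plan is to exploit the spatial representation already set up for polygons: a cyclic polygon $P=\mathrm{conv}\{A_0,\ldots,A_d\}$ with circumcenter $M$ is the parallel projection of the hyperplane section $P'$ of a $(d+1)$-dimensional parallelepiped $Q$ spanned by the edges at $M'$, and all the distinguished points ($G$, $F_M$, $N_M$, $P_M$) are projections of marked points on the main diagonal of $Q$. Each of the three parts then reduces to a short affine identity of the type already used in the proof of Theorem~\ref{thm:feuerbach_poly1}, namely peeling off one summand $(A_i-M)$ (suitably scaled) from the defining expression. Throughout I would work directly with the analytic formulas for $G$, $F_M$, $N_M$ given just before Theorem~\ref{thm:feuerbach_poly1}, applied both to $P$ and to each subpolygon $P_i$; note that for $P_i$ (which has $d$ vertices, i.e.\ is a ``$(d-1)$-dimensional'' configuration in the indexing of Section~\ref{sec:euler}) the Monge point is $N_M^{i}=M+\frac{1}{d-2}\sum_{j\neq i}(A_j-M)$ and the centroid is $G_i=\frac{1}{d}\sum_{j\neq i}A_j$, the same $G_i$ as in Theorem~\ref{thm:feuerbach_poly1}(d)'s companion. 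Also $M$ is simultaneously a circumcenter of $P$ and of every $P_i$, which is what makes the subpolygon statements meaningful (contrast the final remark of Section~\ref{sec:polygons}).

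For part (a) I would compute $N_M - N_M^{i}$. Since $N_M=M+\frac{1}{d-1}\sum_{j=0}^d(A_j-M)$ and $N_M^{i}=M+\frac{1}{d-2}\sum_{j\neq i}(A_j-M)$, writing $v:=\sum_{j=0}^d(A_j-M)$ and $v_i:=v-(A_i-M)$ gives $N_M = M+\tfrac{1}{d-1}v$ and $N_M^{i}=M+\tfrac{1}{d-2}v_i = M+\tfrac{1}{d-2}(v-(A_i-M))$. A direct manipulation shows $N_M$, $N_M^{i}$, and $A_i$ are collinear — indeed, in the spatial picture $N_M^{i}$ and $A_i$ are the projections of the two endpoints of a main diagonal of a sub-parallelepiped of $Q$ whose center projects to $N_M$ — hence all lines $\langle A_iN_M^{i}\rangle$ pass through $N_M$; the one subtlety (as in Theorem~\ref{thm:feuerbach_poly1}(b)) is that some of these lines may degenerate when the pre-image is parallel to the kernel of the projection, but at least two remain to pin down $N_M$. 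I would phrase this via the explicit ratio in which $N_M$ divides $[A_iN_M^{i}]$, read off from the algebra.

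For part (b): $S(F_M,\tfrac{R}{d})$ passing through the $L^M_i$ is literally Theorem~\ref{thm:feuerbach}(b) transported through the projection, so I would just cite that; the genuinely new assertion is that the subpolygon centroids $G_i$ lie on the same circle. Here $G_i=\frac1d\sum_{j\neq i}A_j = \frac1d(\,(d+1)G - A_i\,)$, and $F_M = M+\frac1d v$; a short computation yields $G_i - F_M = \frac{-(M-A_i)}{d}$ up to the same normalization appearing in the proof of Theorem~\ref{thm:feuerbach}, so $\|G_i-F_M\|=\tfrac{R}{d}$ because $\|A_i-M\|=R$. (This is exactly the computation $\|G_j-F_M\|=\|\tfrac{M-A_j}{d}\|$ from that proof, now reinterpreted for the polygon.) For part (c), with $v$ and $v_i$ as above, the claimed center is $C:=M+\tfrac{1}{d-2}v$ and $N_M^{i}=M+\tfrac{1}{d-2}v_i$, so $N_M^{i}-C = -\tfrac{1}{d-2}(A_i-M)$, whence $\|N_M^{i}-C\|=\tfrac{R}{d-2}$; that $C$ lies on the Euler line is immediate since it is $M$ plus a scalar multiple of $v = P_M - M$.

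The main obstacle is not computational — every step is a one-line affine identity — but rather bookkeeping and hypotheses: I need to (i) correctly identify the Monge-point/centroid formulas for the $d$-vertex subpolygons $P_i$ with the right constant $d-2$ in place of $d-1$ (an off-by-one that is easy to slip on), (ii) justify that $M$ is genuinely a circumcenter for each $P_i$, (iii) handle, for part (a), the possible non-existence or non-uniqueness of intersection points in planes that are not strictly convex, exactly as flagged in Remark~\ref{rem:feuerbach1}, and (iv) make sure the degeneracy count for the lines $\langle A_iN_M^{i}\rangle$ (at most $d-1$ bad, at least $2$ good) is argued as in Theorem~\ref{thm:feuerbach_poly1}(b) rather than asserted. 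Once these points are in place, the proof is a sequence of short displayed computations paralleling the proofs of Theorems~\ref{thm:feuerbach} and~\ref{thm:feuerbach_poly1}.
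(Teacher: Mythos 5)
Your proposal is correct and follows essentially the same route as the paper: part (a) is the identity $N_M=A_i+\frac{d-2}{d-1}(N_M^{i}-A_i)$ obtained by peeling off the term $(A_i-M)$, part (b) reuses the computation $\|G_i-F_M\|=\|\frac{M-A_i}{d}\|=\frac{R}{d}$ from Theorem \ref{thm:feuerbach}, and part (c) identifies the circle through the $N_M^{i}$ with center $M+\frac{1}{d-2}\sum_j(A_j-M)$ (the paper phrases this as a homothety of the circumcircle with center $N_M$ and ratio $-\frac{1}{d-2}$, while you verify $N_M^{i}-C=-\frac{1}{d-2}(A_i-M)$ directly, which is the same fact). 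Your bookkeeping of the off-by-one constant $d-2$ for the $d$-vertex subpolygons and of $M$ being a circumcenter of each $P_i$ matches the paper's setup, so no gap remains.
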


\begin{proof}
We have 
\begin{eqnarray*}N_M=M+\frac{1}{d-1}\sum_{j=0}^d (A_j-M)&=&A_i+\frac{d-2}{d-1}\left(\left(M+\frac{1}{d-2}\sum_{\substack{j=0\\j\neq i}}^d (A_j-M)\right) -A_i\right)\\
&=& A_i+\frac{d-2}{d-1}\left(N_M^{i} -A_i\right),\end{eqnarray*}
which proves part (a). Part (b) is clear with Theorem \ref{thm:feuerbach} and the fact that the segments $[F_M G_i]$ and $[F_M L_i^{M}]$ have equal length and are homothets of $[MA_i]$ for each $i=0,\ldots,d$ (with factor $\frac{1}{d}$ and homothety center $N_M$).  For part (c), observe that for each $i=0,\ldots,d$, $N_M^{i}$ is the intersection of the lines $\langle MG_i\rangle$ and $\langle A_iN_M\rangle$, see also Figure \ref{fig:euler1}. Since the above equation shows that $N_M$ divides the segment $[A_iN_M^{i}]$ in the ratio $(d-2)\colon 1$, the homothet of the circumsphere with respect to homothety center $N_M$ and homothety ratio $-\frac{1}{d-2}$ passes through the $N_M^{i}$. Thus the corresponding center can be calculated as $M+\frac{1}{d-2}\sum_{j=0}^d (A_j-M)$ (on the Euler line), and the radius is $\frac{R}{d-2}$. 
\end{proof}

\begin{figure}
\begin{center}
\subfigure[The Feuerbach circle of half size.]{
\includegraphics[width=.6\textwidth]{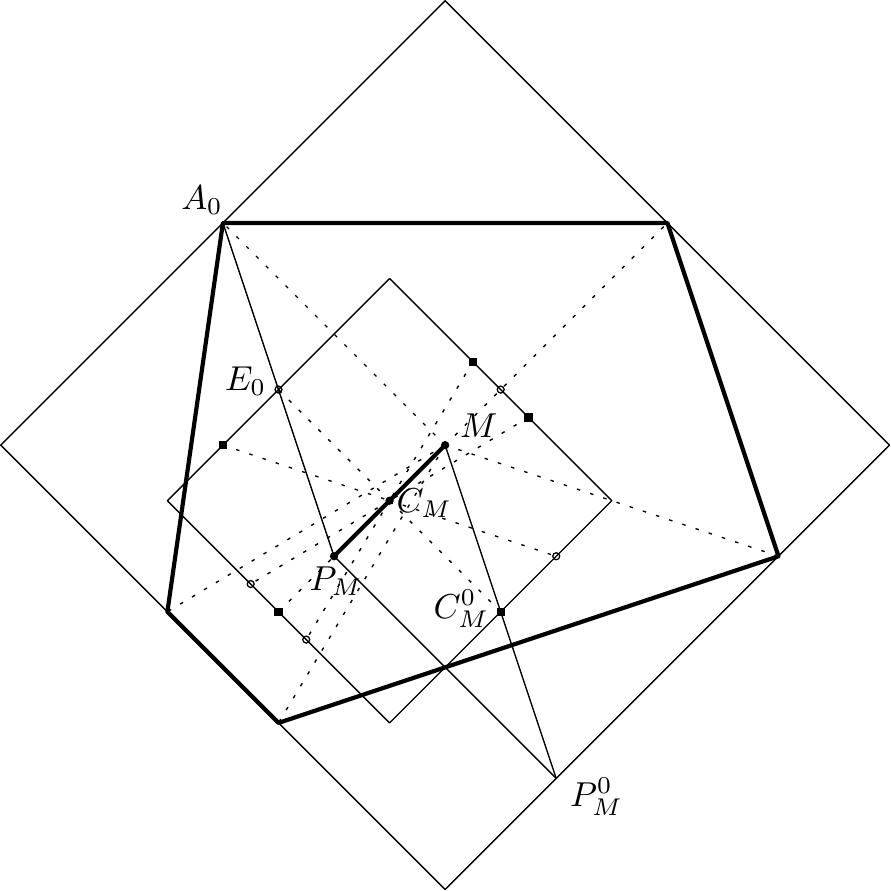}}
\subfigure[The Feuerbach circle of Collings.]{
\includegraphics[width=.6\textwidth]{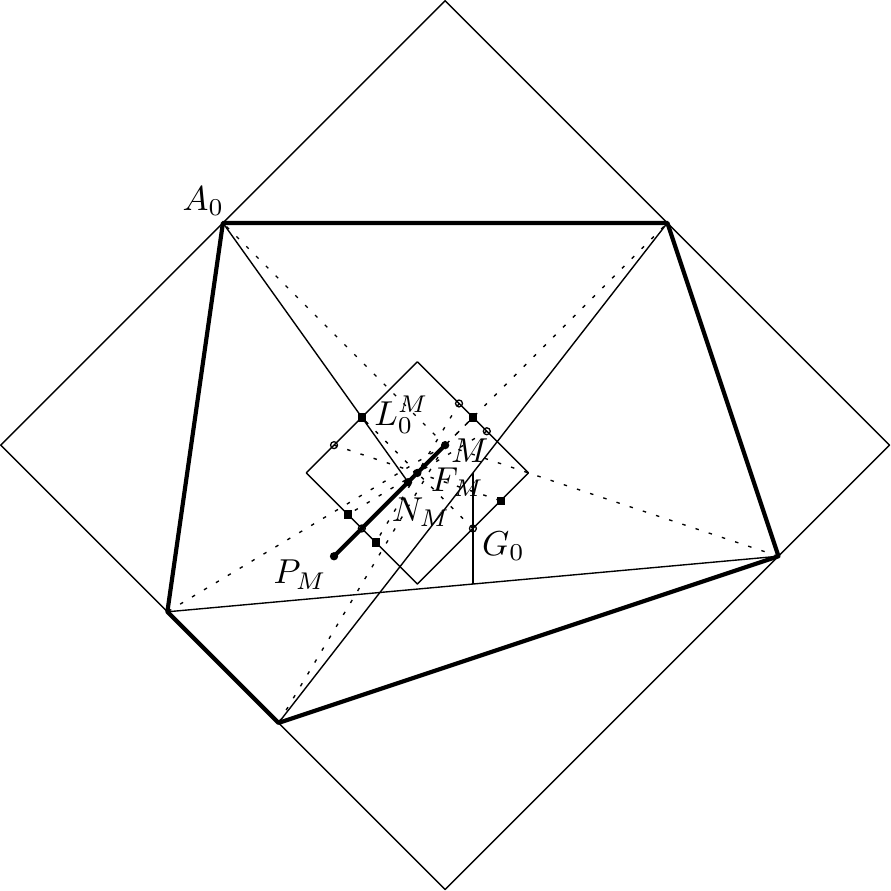}}
\end{center}
\caption{Comparison of different definitions of the Feuerbach circle for a pentagon in the $\ell_1$-norm. For (a) see Remark \ref{rem:feuerbach1}, and for (b) see Remark \ref{rem:feuerbach2}. Respective radii are marked by dotted line segments, and the relevant part of the Euler line $[MP_M]$ is marked in bold. One pair of special points on the Feuerbach circle is constructed in each case (with thin solid auxiliary lines).}
\label{fig:polygon}
\end{figure}

\begin{rem}
Collings \cite{c1967:cpatel} proved a variant of part (a) for the Euclidean plane and called the point $N_M$ differently, namely the \emph{orthocenter} of the polygon. In fact, Collings' orthocenter (per our definition, the Monge point $N_M$) was defined inductively, using the base case $d=2$, i.e., starting at sub-triangles of $P$, whose Monge point, complementary point, and $C$-orthocenter coincide. Note that an inductive definition of the Monge point as such necessitates that $M$ is the circumcenter at each stage of the recursion (otherwise the resulting points at each stage would not correspond to our definition of Monge point), and thus only works in the plane. In the context of $d$-simplices, we did not consider this recursion for precisely this reason (although of course, the respective lines exist in higher-dimensional space, and they are concurrent at the corresponding points!).
\end{rem}

\begin{rem}\label{rem:feuerbach2}
Part (b) was also proved for the Euclidean plane in \cite{c1967:cpatel}, and in analogy with the nine-point-circle of a triangle, the circle $S(F_M,\frac{R}{d})$ was named the (generalized) \emph{nine-point-circle}, although it was only observed to pass through the $(d+1)$ centroids $G_i$.
B. Herrera G\'omez \cite{hg2014:tfcatorcotcp} extended the statements, for example by proving (c) for the Euclidean plane, and by investigating related infinite families of circles.
\end{rem}

%%%%%%%%%%%%%%%%%%%%%%%%%%%%%%%%%%%%%%%%%%%%%%
\section{Concluding remarks and open problems}
%%%%%%%%%%%%%%%%%%%%%%%%%%%%%%%%%%%%%%%%%%%%%%

Solutions to questions from Elementary Geometry in normed spaces often yield an interesting tool and form the first step for attacking problems in the spirit of Discrete and Computational Geometry in such spaces (see, e.g., \cite{ams2012:medcacinpp1,ams2012:medcacinpp2} for the concepts of circumballs and minimal enclosing balls, or \cite[Section 4]{ms2004:tgomsas2} referring to bisectors as basis of an approach to Minkowskian Voronoi diagrams). And of course it is an interesting task for geometers to generalize notions like orthogonality (see \cite{ab1988:oinlsas1,ab1989:oinlsas2,amw2012:oboaioinls}), orthocentricity (cf. \cite{ag1960:otgomp,ms2007:tfcaoinp,mw2009:oosiscnp,rspr2015:oosimp}), isometries (see \cite{ms2009:riscmp,mss2014:gaoscmp}), and regularity (see \cite{ms2011:rtinp}) in absence of an inner product. In case of regularity, we may ask which figures are special, and what are useful concepts to describe their degree of symmetry in normed planes and spaces? For Minkowski spaces nothing really satisfactory is done in this direction, and it is clear that a corresponding hierarchical classification of types of simplices would yield the first step here. Thus, it would be an interesting research program to extend the generalizable parts of the concepts investigated in \cite{ehm2005:coscarfs,ehm2005:osatc,ehm2008:osab} to normed spaces: what particular types of simplices are obtained if special points of them, called "centers" (like circum- and incenters, centroids, Monge points, Fermat-Torricelli points etc.), coincide or lie, in cases where this is not typical (e.g., in case of the incenter), on the Euler line? In view of \cite{ms2009:riscmp,mss2014:gaoscmp}, a related interesting task might be the development of symmetry concepts based on Minkowskian isometries.

Another interesting point of view comes in with the field of geometric configurations which is summarized by the recent monograph \cite{g2009:copal}. Namely, the Three-Circles-Theorem and Miquel's Theorem can be successfully extended to normed planes (see \cite{ag1960:otgomp,ms2007:tfcaoinp,s2010:omtaiinp} and thus have acquired some recent popularity. Clifford's circle configuration, for circles of equal radii also called Clifford's Chain of Theorems (see \cite{z1940:kidk,m1997:ttctccaez}), is a direct generalization of the Three-Circles-Theorem and also part of the collection of theorems which nicely ties to visualizations of the Euler line and the Feuerbach circle in the spirit of descriptive geometry (see our discussion at the beginning of Section \ref{sec:polygons} above). Based on \cite{ag1960:otgomp,ms2007:tfcaoinp}, Martini and Spirova extend in \cite{ms2008:ccotiscmp} the Clifford configuration for circles of equal radii to strictly convex normed planes, and prove properties of the configuration as well as characterizations of the Euclidean plane among Minkowski planes. Using our terminology from Section \ref{sec:polygons} above, one may easily color the vertices of the parallelepiped $Q$ alternatingly red and blue, with $M'$ being blue. Then the projected blue vertices are centers of circles of the Clifford configuration, whereas the projected red vertices are in the intersection of certain subsets of the circles. Due to the successful extension of these topics to normed planes and spaces one might hope that also further configuration concepts can be generalized this way. E.g., one can check whether the comprehensive geometry of $n$-lines (which are the natural extensions of complete quadrilaterals; see Section 4 of the survey \cite{m1995:rrieg2}) and systems of circles corresponding with them contain parts which are generalizable this way.

As basic notions like isoperimetrix (see \cite[\S 4.4 and \S 5.4]{t1996:mg}) demonstrate, \emph{duality} (of norms) plays an essential role in the geometry of normed spaces. This concept should also be used in that part of Minkowski Geometry discussed here. It should be checked how far this important concept can be used to get, in correspondence with already obtained results, also ``dual results'', such that for example results on notions like ``circumball'' and ``inball'' might be dual to each other.

Finally we mention that still for the Euclidean plane there are new generalizations of notions, such as generalized Euler lines in view of so-called circumcenters of mass etc. (see \cite{tt2015:rotcom}), which could, a fortiori, also be studied for normed planes and spaces.

\bibliographystyle{plain}
\bibliography{bibliography}

\begin{flushleft}
Undine Leopold\\
Technische Universit\"at Chemnitz \\
Fakult\"at f\"ur Mathematik\\
D - 09107 Chemnitz\\
Germany\\[0.5ex]
undine.leopold@mathematik.tu-chemnitz.de\\[2ex]

Horst Martini\\
Technische Universit\"at Chemnitz \\
Fakult\"at f\"ur Mathematik\\
D - 09107 Chemnitz\\
Germany\\[0.5ex]
horst.martini@mathematik.tu-chemnitz.de
\end{flushleft}

\end{document}